\newtheorem{theorem}{Theorem}[section]
\newtheorem{lemma}[theorem]{Lemma}
\newtheorem{corollary}[theorem]{Corollary}
\newtheorem{claim}{Claim}
\newenvironment{proof}
      {\medskip\noindent{\bf Proof:}\hspace{1mm}}
      {\hfill$\Box$\medskip}
\def\qed{\ifvmode\mbox{ }\else\unskip\fi\hskip 1em plus 10fill$\Box$}
\def\Ddots{\mathinner{\mkern1mu\raise\p@
\vbox{\kern7\p@\hbox{.}}\mkern2mu
\raise4\p@\hbox{.}\mkern2mu\raise7\p@\hbox{.}\mkern1mu}}
\def\cp{\mbox{\rm cp}}
\title{\vspace{-0.7cm}Short proofs of some extremal results}
\author{David Conlon\thanks{Mathematical Institute, Oxford OX1 3LB, UK. Email: {\tt david.conlon@maths.ox.ac.uk}. Research supported by a Royal Society University Research 
Fellowship.} \and Jacob Fox\thanks{Department of Mathematics, MIT, Cambridge, MA 02139-4307. Email: {\tt fox@math.mit.edu}. Research supported by a Simons Fellowship and NSF grant 
DMS-1069197.} \and Benny Sudakov\thanks{Department of Mathematics, UCLA, Los Angeles, CA 90095. Email: {\tt bsudakov@math.ucla.edu}. 
Research supported in part by NSF grant DMS-1101185, by AFOSR MURI grant FA9550-10-1-0569 and by a USA-Israel BSF grant.}}
\date{}
\begin{document}
\maketitle

\begin{abstract}
We prove several results from different areas of extremal combinatorics, giving complete or partial solutions to a number of open problems. These results, coming from areas such as extremal graph theory, Ramsey theory and additive combinatorics, have been collected together because in each case the relevant proofs are quite short. 
\end{abstract}

\section{Introduction}

We study several questions from extremal combinatorics, a broad area of discrete mathematics which deals with the problem of maximizing or minimizing the cardinality of a collection of 
finite objects satisfying a certain property. The problems we consider come mainly from the areas of extremal graph theory, Ramsey theory and additive combinatorics. In each case, we 
give a complete or partial solution to an open problem posed by researchers in the area.

While each of the results in this paper is interesting in its own right, the proofs are all quite short. Accordingly, in the spirit of Alon's `Problems and results in extremal 
combinatorics' papers \cite{Al1, Al2}, we have chosen to combine them. We describe the results in brief below. For full details on each topic we refer the reader to the relevant 
section, each of which is self-contained and may be read separately from all others.

In Section \ref{sec:induced}, we improve a result of Alon \cite{Al1} on the size of the largest induced forest in a bipartite graph of given average degree. In Section 
\ref{sec:saturated}, we prove a conjecture of Balister, Lehel and Schelp \cite{BLS06} on Ramsey saturated graphs. We study the relationship between degeneracy and online Ramsey games 
in Section \ref{sec:degeneracy}, addressing a question raised by Grytczuk, Ha\l uszczak and Kierstead \cite{GHK04}. In Section \ref{sec:erdosrogers}, we improve a recent result of 
Dudek and Mubayi \cite{DM12} on generalized Ramsey numbers for hypergraphs. We prove a conjecture of Cavers and Verstra\"ete \cite{CaVe} in Section \ref{sec:partitions} by showing that 
any graph on $n$ vertices whose complement has $o(n^2)$ edges has a clique partition using $o(n^2)$ cliques. In Section \ref{sec:Hilbert}, we improve a result of Hegyv\'ari \cite{H99} 
on the size of the largest Hilbert cube that may be found in a dense subset of the integers.

All logarithms are base $2$ unless otherwise stated. For the sake of clarity of presentation, we systematically omit floor and ceiling signs whenever they are not crucial. We also do 
not make any serious attempt to optimize absolute constants in our statements and proofs.

\section{Induced forests in sparse bipartite graphs} \label{sec:induced}

Every bipartite graph trivially has an independent set with at least half of its vertices. Under what conditions can we find a considerably larger sparse set? A {\it forest} is a graph 
without cycles. Akiyama and Watanabe \cite{AkWa} and, independently, Albertson and Haas \cite{AlHa} conjectured that every planar bipartite graph on $n$ vertices contains an induced 
forest on at least $5n/8$ vertices. Motivated by this conjecture, Alon \cite{Al1} considered induced forests in sparse bipartite graphs, showing that every bipartite graph on $n$ 
vertices with average degree at most $d \geq 1$ contains an induced forest on at least $(\frac{1}{2}+e^{-bd^2})n$ vertices, for some absolute constant $b>0$. On the other hand, there 
exist bipartite graphs on $n$ vertices with average degree at most $d \geq 1$ that contain no induced forest on at least $(\frac{1}{2}+e^{-b'\sqrt{d}})n$ vertices. Alon remarks that it 
would be interesting to close the gap between the lower and upper bounds for this problem. We improve the lower bound here to $(\frac{1}{2}+d^{-bd})n$ for $d \geq 2$.

In particular, since the average degree of any planar bipartite graph is less than $4$, there is an absolute positive constant $\epsilon$ such that every planar bipartite graph on $n$ 
vertices contains an induced forest on at least $(1/2+\epsilon)n$ edges. This gives some nontrivial result on the question raised in \cite{AkWa, AlHa}. More recently, it was shown in 
\cite{KLS10} (see also \cite{Sa}) that every triangle-free (and hence every bipartite) planar graph on $n$ vertices contains an induced forest on at least $71n/128$ vertices.

As in Alon's proof, we show that every sparse bipartite graph contains a large induced subgraph whose connected components are stars.

\begin{theorem} \label{thm:induced}
Let $d$ be a positive integer. Every bipartite graph $G$ on $n$ vertices with average degree at most $d$ contains an induced subgraph on at least $(\frac{1}{2}+\delta)n$ vertices with $\delta=(2^7d^{2})^{-4d}$ whose connected components are stars.
\end{theorem}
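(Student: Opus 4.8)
The plan is to find a large set $S$ consisting of all of the larger colour class $Y$ (so that $|Y|\ge n/2$ and $|X|\le n/2$) together with a carefully chosen $S_X\subseteq X$ for which $G[Y\cup S_X]$ is a star forest. The first step is a small structural observation: since $G[Y\cup S_X]$ retains all of $Y$, a routine case analysis (using that a bipartite graph is a star forest precisely when it has no induced $P_4$ and no induced $4$-cycle) shows that $G[Y\cup S_X]$ is a star forest if and only if, for every $x\in S_X$ with $\deg_G(x)\ge 2$ and every $y\in N_G(x)$, one has $N_G(y)\cap S_X=\{x\}$; equivalently, the closed neighbourhoods $N_G[x]$ of the degree-$\ge 2$ vertices retained in $S_X$ are pairwise disjoint and meet $S_X$ only in $x$. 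In particular $S_X=\{x\in X:\deg_G(x)\le 1\}$ is always valid, giving the ``base solution'' $|S|=n-m$, where $m$ is the number of vertices of $X$ of degree at least $2$; since $m\le|X|\le n/2$ (and also $m\le|E(G)|/2\le dn/4$), if $m<\delta n$ we are already done, so we may assume $m\ge\delta n$.

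To beat the target when the base solution does not, I would add some degree-$\ge 2$ vertices of $X$ as extra star centres: take a set $P$ of vertices of $X$ of degree $\ge 2$ with pairwise disjoint $Y$-neighbourhoods, and set $S_X=P\cup\big(\{x\in X:\deg_G(x)\le 1\}\setminus N_G(N_G(P))\big)$, which by the criterion above is a valid star forest, and for which $|S|\ge|Y|+|P|\ge n/2+|P|$. The base solution fails only when $m>(1/2-\delta)n$, and then $X$ has fewer than $\delta n$ vertices of degree $\le 1$, so almost nothing is destroyed by the second neighbourhood of $P$; hence $|S|\ge(n-m)+|P|-\delta n$, and it suffices to find $P$ with $|P|\ge 2\delta n$. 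If the maximum degree of $G$ is at most a threshold $\Delta_0$ that is a function of $d$ only (of order $(2^7d^2)^{2d}$), this is routine: each degree-$\ge 2$ vertex of $X$ shares a $Y$-neighbour with fewer than $\Delta_0^{\,2}$ others, so a greedy argument produces $P$ with $|P|\ge m/\Delta_0^{\,2}\ge(1/2-\delta)n/\Delta_0^{\,2}\ge 2\delta n$, as needed.

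The one remaining, and genuinely hard, case is when $G$ has a vertex of degree larger than $\Delta_0$. Such vertices cannot simply be deleted: there can be as many as $dn/\Delta_0$ of them, which is far more than $\delta n$. Instead I expect one must incorporate them, via an iteration of about $d$ rounds: in each round one commits to the structure forced by the current highest-degree vertices — making a high-degree vertex of $Y$ the centre of a star whose leaves are a privately chosen set of its low-degree neighbours, so that only a controlled number of further vertices are blocked — then removes the affected part and recurses on what is left, whose average degree is still at most $d$ but whose maximum degree has dropped. If each round loses only a $\mathrm{poly}(d)$ factor of the retained fraction, then after the $O(d)$ rounds, together with the $\Delta_0^{-2}$-type gain from the now essentially bounded-degree remainder handled as in the previous step, one arrives at $\delta=(2^7d^2)^{-4d}$. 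The crux of the whole argument is precisely this accounting: one must choose the private leaf-sets and the successive degree thresholds so that committing to a high-degree vertex blocks only a $\mathrm{poly}(d)$-bounded number of vertices for each one it saves, so that the per-round loss is genuinely multiplicative rather than additive.
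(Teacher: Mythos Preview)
Your setup is sound: the star-forest criterion on $Y\cup S_X$ is correct, and when the maximum degree is bounded a greedy packing of $X$-vertices with pairwise disjoint $Y$-neighbourhoods works (note that $|S|\ge|Y|+|P|\ge n/2+|P|$, so $|P|\ge\delta n$ already suffices, not $2\delta n$). The gap is exactly where you put it, but it is more than a matter of accounting. Your iteration --- high-degree $y\in Y$ becomes a star centre with low-degree $X$-leaves, then recurse --- loses rather than gains: if a committed leaf $x$ has $\deg_G(x)\le C$, then in the induced subgraph $x$ must have degree~$1$, so its other $\le C-1$ neighbours in $Y$ must be deleted from $S$; the net change per leaf is $1-(C-1)<0$ once $C\ge 3$. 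Nor is there a mechanism by which the maximum degree drops: removing one high-degree $y$ and a few of its neighbours leaves all the other high-degree $Y$-vertices untouched. More structurally, once you insist on keeping all of $Y$, any $x\in S_X$ of degree $\ge 2$ blocks all of $N_G(N_G(x))$ from $S_X$; a single high-degree $Y$-neighbour of $x$ makes this set huge, and there is no bound on how many $X$-vertices have such a neighbour short of the full edge count $dn/2$.

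The paper's resolution is precisely to \emph{not} keep all of $Y$. It builds a degree hierarchy $\emptyset=Y_0\subset\cdots\subset Y_{4d}$ in $Y$, where $Y_i=\{y\in Y:\deg(y)\ge d_i\}$ for geometrically decreasing thresholds $d_i$ tuned so that $|Y_{i-1}|/n$ and $1/d_i$ are comparable. Since each edge of $G$ has its $Y$-endpoint in exactly one shell $Y_i\setminus Y_{i-1}$ (or in $Y\setminus Y_{4d}$), the $4d$ shells together carry at most $dn/2$ edges, so some shell carries at most $n/8$. At that index $i$, most $X$-vertices of degree $\le 8d$ have no neighbour in $Y_i\setminus Y_{i-1}$; hence every neighbour they have in $Y\setminus Y_{i-1}$ lies in $Y\setminus Y_i$ and has degree $<d_i$, and the greedy packing (now with conflicts bounded by $8d\cdot d_i$) goes through. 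The star forest is the packed $X$-vertices together with $Y\setminus Y_{i-1}$, and the packing size is calibrated to exactly offset $|Y_{i-1}|$. This hierarchy-plus-pigeonhole device --- which simultaneously controls the degrees of the relevant $Y$-vertices and bounds the cost of the deletion --- is the missing idea in your outline.
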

\begin{proof}
Suppose, for contradiction, that the theorem is false. Let $X$ and $Y$ denote a bipartition of $G$ into independent sets with $|X| \leq |Y|$. We may assume $\frac{1}{2}n \leq |Y| < (\frac{1}{2}+\delta)n$ as $Y$ is an independent set and hence induces a star-forest. We have $|X| \geq (1/2-\delta)n \geq n/4$.

We will construct a sequence $Y_0 \subset Y_1 \subset \ldots \subset Y_{4d}$ of nested subsets of $Y$. Let $Y_0=\emptyset$. Once $Y_{i-1}$ has been defined, let $\delta_i=\delta+|Y_{i-1}|/n$, $d_i=1/\left(2^7d\delta_i\right)$ and $Y_i$ consist of those vertices in $Y$ with degree at least $d_i$. Note that $\delta_i$ increases, $d_i$ decreases and $Y_i$ grows as $i$ increases. As $G$ has at most $dn/2$ edges and every vertex in $Y_i$ has degree at least $d_i$, we have $|Y_i|d_i \leq d n/2$ and hence
$|Y_i|/n \leq d/(2d_i)$. We therefore have $\delta_{1}=\delta$ and 
$$\delta_{i+1} \leq \delta+d/(2d_i) =\delta+(d/2)(2^7d\delta_i)=\delta+2^6d^2\delta_i \leq 2^7d^2\delta_i.$$
Hence, for $i \geq 1$, by induction on $i$ we have $\delta_i \leq (2^7d^2)^{i-1}\delta$ and $d_i \geq 1/((2^7d^2)^i\delta)$. 

Let $e_i$ denote the number of edges containing a vertex in $Y_i \setminus Y_{i-1}$. In the claim below we will prove that $e_i \geq n/8$ for all $i \geq 1$. This will complete the proof 
by contradiction, as the number of edges of $G$ is at most $dn/2$ and at least
$$\sum_{i=1}^{4d} e_i > 4d \cdot n/8 = dn/2.$$

\begin{claim} For $i \geq 1$, $e_i > n/8$. 
\end{claim}

Indeed, suppose $e_i \leq n/8$. Let $X_i \subset X$ consist of those vertices not adjacent to any vertex in $Y_i \setminus Y_{i-1}$. We have $|X_i| \geq |X|-e_i \geq n/8$. Let $X_i' \subset X_i$ consist of those vertices of degree at most $8d$. We have $|X_i'| \geq |X_i|/2 \geq n/16$ as otherwise
$e(G) > |X_i \setminus X_i'|8d > (n/16)8d=dn/2$, a contradiction. Pick out vertices $x_1,\ldots,x_t$ from $X_i'$ greedily as follows. We pick $x_j$ arbitrarily from the remaining vertices, and delete from $X_i'$  all vertices
which share a neighbor with $x_j$ not in $Y_{i-1}$. Note that $x_j$ has degree at most $8d$ and all its neighbors not in $Y_{i-1}$ are also not in $Y_i$ (by the definition of $X_i$) and so have degree less than $d_i \geq 1$. Hence, there are at most $8d(d_i-1)$ vertices in $X_i'$ which share a neighbor with $x_j$ not in $Y_{i-1}$. We get $t \geq |X_i'|/(8dd_i) \geq n/(2^7d d_i)=\delta_i n$ and the induced subgraph of $G$ with vertex set $\{x_1,\ldots,x_t\} \cup (Y \setminus Y_{i-1})$ is a star-forest with at least $\delta_i n +|Y|-|Y_{i-1}|= \delta n+|Y| \geq (\frac{1}{2}+\delta)n$ vertices, a contradiction. This verifies the claim and completes the proof of the theorem.
\end{proof}

A much better bound may be proved for regular graphs. Indeed, it is shown in \cite{AlMuTh} that every $d$-regular bipartite graph on $n$ vertices contains an induced forest with at least $(\frac{1}{2}+\frac{1}{2(d-1)^2})n$ vertices. Moreover, this result is sharp in its dependence on $d$. It seems to us that the bound given in Theorem~\ref{thm:induced} should also be close to best possible. It would, for example, be very interesting to improve Alon's upper bound to show that there exist bipartite graphs on $n$ vertices with average degree at most $d \geq 1$ that contain no induced forest on at least $(\frac{1}{2}+e^{-b' d})n$ vertices.

\section{Ramsey saturated graphs} \label{sec:saturated}

The {\it Ramsey number} $r(G)$ of a graph $G$ is the smallest natural number $N$ such that every two-coloring of the edges of the complete graph $K_N$ contains a monochromatic copy of $G$. 
The fact that these numbers exist was first proved by Ramsey \cite{R30}.

Following Balister, Lehel and Schelp \cite{BLS06}, we say that a graph $G$ on $n$ vertices is {\it Ramsey unsaturated} if there exists an edge $e \in E(K_n)\char92 E(G)$ such that $r(G + e) 
= r(G)$. However, if $r(G + e) > r(G)$ for all edges $e \in E(K_n)\char92 E(G)$, we say that $G$ is {\it Ramsey saturated}.

There are many open questions about Ramsey saturated and unsaturated graphs. For example, it is not even known whether $K_n - e$ is saturated, that is, whether $r(K_n) > r(K_n - e)$, 
for any $n \geq 7$, though Balister, Lehel and Schelp conjecture that this should be the case.

One result proved by Balister, Lehel and Schelp \cite{BLS06} is that there are at least $\lfloor (n-2)/2 \rfloor$ non-isomorphic Ramsey saturated graphs on $n$ vertices. Moreover, they 
conjectured that there should be $c > 0$ and $\epsilon > 0$ for which there are at least $c n^{1 + \epsilon}$ non-isomorphic Ramsey saturated graphs on $n$ vertices. Here we prove this 
conjecture in a strong form, as follows.

\begin{theorem} \label{thm:saturated}
There exists $c > 0$ such that there are at least $2^{c n^2}$ non-isomorphic Ramsey saturated graphs on $n$ vertices. 
\end{theorem}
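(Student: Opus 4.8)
The plan is to show that a positive fraction of all graphs on $n$ vertices are Ramsey saturated, by arguing that almost every graph $G$ has Ramsey number very close to the Ramsey number of the complete graph, so close that adding a single edge cannot possibly change it. Concretely, I would use the standard fact that the random graph $G(n,1/2)$ has, with high probability, no clique and no independent set of size $2\log n + O(1)$, so $r(G) \leq R(2\log n + O(1))$ — wait, that is the wrong direction. Let me reconsider: the useful direction is a \emph{lower} bound on $r(G)$ that holds for almost all $G$. The key point is that for a random graph $G = G(n,1/2)$, with high probability every two-coloring of $K_N$ with $N$ slightly less than $r(K_n)$ already fails to contain a monochromatic $G$ for a robust reason, so that $r(G) = r(K_n)$ or is at least $r(K_n) - o(\cdot)$; more robustly, one wants $r(G+e) > r(G)$ for \emph{all} non-edges $e$, and the cleanest route is to find a family of $2^{cn^2}$ graphs that are all "sandwiched" between two graphs with equal Ramsey number.

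**Step 1 (the sandwich).** Fix $n$ and let $N = r(K_{n-1}) - 1$, so there is a two-coloring $\chi$ of $K_N$ with no monochromatic $K_{n-1}$. Any graph $G$ on $n$ vertices that contains $K_{n-1}$ as a subgraph (equivalently, has a vertex of degree $n-1$, or more simply contains a spanning-minus-one clique) has $r(G) \geq r(K_{n-1}) = N+1$; and trivially $r(G) \leq r(K_n)$. So if $r(K_n) = r(K_{n-1}) + 1$ — which is \emph{not} known — we would be done immediately for all such $G$. Since that equality is open, the honest approach is different: I would instead use that $G$ and $G+e$ are squeezed between $G$ and $K_n$, and exploit a theorem (or direct argument) that for almost all $G$ on $n$ vertices, $r(G) = r(G + e)$ \emph{fails} for every $e$ — i.e., we need a property that is monotone enough.

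**Step 2 (the real mechanism, via a fixed good coloring).** Here is the approach I actually expect to work. By a result of Erd\H{o}s--Szemer\'edi-type / the explicit constructions behind Ramsey lower bounds, pick $N$ as large as possible such that there is a two-coloring $\chi$ of $E(K_N)$ in which \emph{every} vertex subset of size $n$ spans, in each color, a graph with a certain richness — precisely, such that no $n$-set is monochromatic and moreover in every $n$-set, in \emph{each} color, the graph has minimum degree $\geq 1$ (no isolated vertex). If $G$ on $n$ vertices has the property that both $G$ and its complement have minimum degree $\geq 1$, then $\chi$ contains no monochromatic $G$, so $r(G) \geq N+1$. Now I would choose the family $\mathcal{F}$ of graphs $G$ such that: (i) $G$ has a specified "core" $H_0$ on $n-1$ vertices with $r(H_0) = r(K_n)$ (or close), forcing $r(G) = r(K_n)$, and (ii) $G$ ranges over $2^{cn^2}$ graphs differing only on edges within a linear-sized set where those edges are irrelevant to the Ramsey number. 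Then every $G + e$ still contains the core, so $r(G+e) \geq r(K_n) \geq r(G)$, giving equality — but that shows \emph{unsaturated}, the opposite of what we want!

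**Correct mechanism.** I now see the right idea: we want $r(G+e) > r(G)$, so we need adding \emph{any} edge to strictly increase the Ramsey number. This happens if $G$ is "Ramsey-critical" in a strong sense. The plan: let $G^* = K_n$ minus a perfect matching (or minus a sparse graph $M$ on $n$ vertices), and take $\mathcal{F} = \{K_n \setminus M : M \text{ ranges over a family of } 2^{cn^2} \text{ graphs on } n \text{ vertices, each of bounded density}\}$. For such $G = K_n \setminus M$, any non-edge $e$ lies in $M$, and $G + e = K_n \setminus (M - e)$ is strictly denser. The task reduces to showing $r(K_n \setminus M) < r(K_n \setminus (M - e))$ whenever $M$ is, say, a graph with $\le cn^2$ edges chosen so that removing any single edge strictly changes the Ramsey number. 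The main obstacle — and what I'd spend the bulk of the proof on — is exhibiting such an $M$: one needs a two-coloring of $K_{N}$, with $N = r(K_n \setminus M)$, that embeds $K_n \setminus M$ monochromatically in "just barely" no way, i.e., such that the only obstruction is exactly the edges of $M$; adding back any edge of $M$ should restore a monochromatic copy. I would construct this by a probabilistic / alteration argument: take $\chi$ random on $K_N$ with $N$ tuned so that the expected number of monochromatic copies of $K_n \setminus M$ is small but the expected number of monochromatic copies of $K_n \setminus (M-e)$ is large for every $e$, then delete a vertex from each bad copy of the former while keeping the latter abundant via a second-moment or deletion bound. The counting $2^{cn^2}$ then comes from the fact that $M$ itself can be taken from an exponentially large family (e.g., all subgraphs of a fixed sparse "gadget" graph on $n$ vertices whose edge set has size $\Theta(n^2)$ but whose deletion structure is rigid), with different $M$'s giving non-isomorphic $G$'s because, say, they have distinct degree sequences or distinct numbers of edges. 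The hard part is making the coloring $\chi$ simultaneously robust for all $2^{cn^2}$ choices — most likely one fixes a single master coloring and lets $M$ vary within it.
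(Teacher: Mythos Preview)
Your proposal has a genuine gap: all of your attempts try to \emph{exhibit} saturated graphs directly, i.e., to produce a family of $G$'s and verify $r(G+e)>r(G)$ for every non-edge $e$. This is extremely hard --- as the paper itself notes, it is not even known whether $K_n - e$ is saturated for $n\ge 7$. Your ``correct mechanism'' asks for a two-coloring $\chi$ of $K_N$ with no monochromatic $K_n\setminus M$ but with a monochromatic $K_n\setminus(M-e)$ for \emph{every} $e\in M$; you then want this to work simultaneously for $2^{cn^2}$ choices of $M$. There is no reason to expect such a coloring to exist, and the alteration sketch you give (tune $N$ so that expected copies of $K_n\setminus M$ are few but copies of $K_n\setminus(M-e)$ are many) cannot work as stated: these two graphs differ by a single edge, so their copy counts in a random coloring differ by a factor of $2$, not by the gap you would need to delete all of one while preserving the other.

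The paper's argument sidesteps this entirely by never certifying that any particular graph is saturated. The idea is a sandwich-plus-counting argument: take many sparse bipartite graphs $G_1$ (with $n^2/20$ edges and bounded maximum degree), for which the bounded-degree Ramsey bound gives $r(G_1)\le 2^{n/6}$; on the other hand, any graph $G_2$ with $n^2/5$ edges has $r(G_2)>2^{n/6}$ by the probabilistic lower bound $r(G)>2^{m/n-1}$. Now, starting from $G_1$ and repeatedly adding a non-edge that keeps the Ramsey number unchanged, the process must halt at some saturated $G$ with fewer than $n^2/5$ edges (else we would reach a $G_2$ with $r(G_2)=r(G_1)$). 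Each such saturated $G$ contains at most $\binom{n^2/5}{n^2/20}$ labeled subgraphs $G_1$, while there are at least $\tfrac{1}{2}\binom{n^2/4}{n^2/20}$ choices of $G_1$; dividing gives $2^{\Omega(n^2)}$ labeled saturated graphs, and dividing by $n!$ finishes. The point you are missing is that saturation is guaranteed \emph{somewhere} along any edge-adding path from a low-Ramsey graph to a high-Ramsey graph, so one only needs quantitative upper and lower Ramsey bounds at the two ends, never a direct saturation certificate.
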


The proof of this theorem is a straightforward combination of two results from graph Ramsey theory. The first is a standard lower bound for the Ramsey number of a graph with $n$ 
vertices and $m$ edges which follows from the probabilistic method.

\begin{lemma} \label{lem:lowerRamsey}
For any graph $G$ with $n$ vertices and $m$ edges, the Ramsey number $r(G)$ satisfies
\[r(G) > 2^{\frac{m}{n} - 1}.\]
\end{lemma}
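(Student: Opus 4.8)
The plan is to use Erdős's classical first-moment argument for Ramsey lower bounds, applied not to a clique but to the general graph $G$. Fix a positive integer $N$ and color each edge of the complete graph $K_N$ red or blue independently and uniformly at random. I want to show that if $N$ is not too large then with positive probability this coloring contains no monochromatic copy of $G$, which by definition forces $r(G) > N$; choosing $N$ as large as the argument allows will then yield the claimed bound.

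The computation is routine. First I would bound the number of copies of $G$ inside $K_N$: a copy is specified by the images of the $n$ vertices of $G$, so there are at most $N^n$ of them (the sharper count $N(N-1)\cdots(N-n+1)/|\mathrm{Aut}(G)|$ is not needed). Any fixed copy of $G$ has exactly $m$ edges, hence is entirely red with probability $2^{-m}$ and entirely blue with probability $2^{-m}$, so it is monochromatic with probability $2^{1-m}$. By linearity of expectation, the expected number of monochromatic copies of $G$ is at most $N^n\,2^{1-m}$. Whenever this quantity is strictly less than $1$, some coloring of $K_N$ has no monochromatic copy of $G$, and so $r(G) > N$; this inequality holds exactly when $N < 2^{(m-1)/n}$, which is already a (very slightly) stronger threshold than $2^{m/n-1}$ since $(m-1)/n \ge m/n - 1$ for $n \ge 1$.

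To extract the stated inequality $r(G) > 2^{m/n-1}$ I would take $N = \lfloor 2^{m/n-1}\rfloor$. Then $N \le 2^{m/n-1}$, so $N^n \le 2^{m-n}$ and the expected number of monochromatic copies is at most $2^{1-n}$, which is strictly less than $1$ as soon as $n \ge 2$; the degenerate case $n \le 1$ (equivalently $m=0$, i.e.\ $G$ has only isolated vertices) is trivial because then $r(G) \ge 1 > 1/2 = 2^{m/n-1}$. Having produced a good coloring of $K_{\lfloor 2^{m/n-1}\rfloor}$, integrality of $r(G)$ gives $r(G) \ge \lfloor 2^{m/n-1}\rfloor + 1 > 2^{m/n-1}$. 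I do not expect any genuine obstacle here: this is essentially the textbook lower bound, and the only point needing a little care is the boundary bookkeeping — the exact choice of $N$, the use of $n \ge 2$ to turn the first-moment bound into a strict inequality, and the trivial degenerate case — all handled above.
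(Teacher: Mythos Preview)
Your proposal is correct and follows exactly the same first-moment random-coloring argument as the paper, with the same choice $N = 2^{m/n-1}$ and the same expectation bound $N^n 2^{1-m} \le 2^{1-n} < 1$. You are simply a bit more careful about integrality and the degenerate case $n \le 1$, which the paper handles implicitly via its convention of omitting floor and ceiling signs.
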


\begin{proof} Let $N = 2^{\frac{m}{n} - 1}$ and color the edges of $K_N$ at random, each edge being red or blue with probability $\frac{1}{2}$. Let $X$ be the random variable counting 
the number of monochromatic copies of $G$. Then \[\mathbb{E}[X] \leq 2^{1-m} N^n = 2^{1- m} (2^{\frac{m}{n} - 1})^n = 2^{1- n} < 1.\] It follows that there must be some coloring of 
$K_N$ which does not contain a monochromatic copy of $G$. \end{proof}

The second lemma we require is an upper bound for the Ramsey number of graphs with $n$ vertices and $m$ edges. The following result \cite{C09, FS09} is sufficient for our purposes, 
though other results \cite{C12, S11} could also be used instead. 

\begin{lemma} \label{lem:DRCRamsey}
For any bipartite graph $G$ with $n$ vertices and maximum degree $\Delta$, the Ramsey number $r(G)$ satisfies
\[r(G) \leq \Delta 2^{\Delta + 5} n.  \tag*{$\Box$}\]
\end{lemma}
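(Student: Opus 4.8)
The plan is to reduce the statement to an embedding problem for dense graphs and then run a greedy embedding controlled by the edge density. Set $N=\Delta 2^{\Delta+5}n$ and fix a two-colouring of $E(K_N)$. One of the two colour classes, say red, spans a graph $\Gamma$ on $N$ vertices with at least $\frac{1}{2}\binom{N}{2}$ edges, hence of average degree at least $(N-1)/2$; it then suffices to show that $\Gamma$ contains a copy of $G$. Write $X\cup Y$ for the bipartition of $G$, so that $|X|+|Y|=n$ and every vertex of $G$ has at most $\Delta$ neighbours.

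The embedding proceeds in two stages. In the first stage one embeds $X$ into $V(\Gamma)$ one vertex at a time, maintaining for each $y\in Y$ a candidate set $S_y\subseteq V(\Gamma)$, namely the set of vertices joined in red to every already-embedded neighbour of $y$ and not yet used as an image; initially $S_y=V(\Gamma)$. Embedding a vertex $x\in X$ to some $v$ replaces each $S_y$ with $y\sim x$ by $S_y\cap N_\Gamma(v)$ (minus $v$ itself). The crucial point is that at every step $v$ can be chosen so that each of these at most $\Delta$ candidate sets shrinks by only a bounded factor --- ideally a factor $2$, matching the density of $\Gamma$ --- in such a way that, once all of $X$ has been embedded, every $S_y$ still has size at least $N2^{-\Delta}/c$ for an absolute constant $c$. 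Since $N2^{-\Delta}/c=32\Delta n/c>n$, each $S_y$ then contains more vertices than the number (at most $n-1$) already used, so in the second stage $Y$ can be embedded greedily, one vertex at a time, completing the copy of $G$; image collisions cause no trouble because used vertices are always removed from the $S_y$.

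The delicate point is this choice of $v$. A crude averaging argument --- a uniformly random $v$ is ``good'' for a fixed candidate set with probability bounded away from zero, and then a union bound is taken over the at most $\Delta$ relevant neighbours --- only controls the per-step loss up to a factor $\mathrm{poly}(\Delta)$, which merely reproduces the older bound $r(G)\le 2^{O(\Delta\log\Delta)}n$ of Graham, R\"odl and Ruci\'nski. Extracting the clean exponential dependence, and with it the stated bound, is exactly the content of the dependent-random-choice-type arguments of \cite{C09, FS09}: one amortises the loss over the at most $\Delta$ times a given $S_y$ is ever cut, and it helps to first pass to a subgraph of $\Gamma$ whose minimum degree is linear in its order and to embed $X$ into that subgraph, so that the total shrinkage of each $S_y$ is $2^{O(\Delta)}$ rather than $(\mathrm{poly}\,\Delta)^{\Delta}$. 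I expect this amortised choice of images to be the main obstacle; the remainder is the routine bookkeeping sketched above, and the numerical constant $2^{5}$ is merely the slack carried through these estimates.
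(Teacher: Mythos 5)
Note first that the paper does not prove this lemma at all: it is stated with a terminal $\Box$ precisely because it is quoted from \cite{C09, FS09}, so there is no internal argument to compare against. Judged on its own terms, your write-up has a genuine gap, and it sits exactly where you say it does. The outer reduction is fine and standard: pass to the majority colour class $\Gamma$, which has edge density at least $1/2$, and embed $G$ there. You also correctly diagnose why the naive greedy embedding with candidate sets $S_y$ falls short: for a \emph{fixed} $S_y$ a constant proportion of choices of $v$ shrink it by only a constant factor, but making all of the up to $\Delta$ sets affected at a given step succeed simultaneously via a union bound forces a loss of order $\Delta$ per cut, whence $(\mathrm{poly}\,\Delta)^{\Delta}$ overall and only $r(G)\le 2^{O(\Delta\log\Delta)}n$. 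The problem is that the entire content of the lemma is the step you then decline to carry out: you write that the clean $2^{O(\Delta)}$ total shrinkage ``is exactly the content of the dependent-random-choice-type arguments of \cite{C09, FS09}'' and that you ``expect this amortised choice of images to be the main obstacle.'' That obstacle \emph{is} the theorem; deferring it to the very references you are meant to be reproving leaves nothing proved.

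Concretely, what is missing is one of the following. Either (i) an actual argument that the images can be chosen so that, for \emph{every} $y$, the product of the shrinkage ratios $|S_y\cap N_\Gamma(v)|/|S_y|$ over the at most $\Delta$ times $S_y$ is cut is $2^{-O(\Delta)}$ --- this needs a convexity or potential-function argument on products of candidate-set sizes, not a per-set union bound, and it is delicate because the guarantee must hold for each $y$ individually, not merely on average. Or (ii) the one-shot dependent random choice lemma underlying \cite{FS09}: the common neighbourhood of a suitable random tuple of vertices of $\Gamma$ contains, after deleting a vertex from each bad $\Delta$-subset, a set $U$ with $|U|>n$ in which every $\Delta$ vertices have more than $n$ common red neighbours; one then places one side of $G$ into $U$ arbitrarily and embeds the other side greedily, with no candidate-set bookkeeping at all. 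Either route fits on a page, but until one of them is written out the proposal is a correct reduction plus a citation, not a proof.
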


{\bf Proof of Theorem \ref{thm:saturated}:} We will show, for $n$ sufficiently large, that there are at least $2^{n^2/80}$ non-isomorphic Ramsey saturated graphs. Consider a 
random labeled bipartite graph $G_1$ between two sets $A$ and $B$, each of size $n/2$, containing $\frac{1}{5} \left(\frac{n}{2}\right)^2 = \frac{n^2}{20}$ edges. By a standard large 
deviation inequality for the hypergeometric distribution, for $n$ sufficiently large, $G_1$ has maximum degree at most $n/8$ (and positive minimum degree) with probability at least 
$1/2$. That is, the number of such 
graphs is at least $\frac{1}{2} \binom{n^2/4}{n^2/20}$. For any such graph, Lemma \ref{lem:DRCRamsey} tells us that \[r(G_1) \leq 2^{\frac{n}{8} + 2} n^2 \leq 2^{\frac{n}{6}}\] for $n$ 
sufficiently large. On the other hand, by Lemma \ref{lem:lowerRamsey}, any graph $G_2$ on $A \cup  B$ with $m =\frac{n^2}{5}$ 
edges satisfies 
\[r(G_2) > 2^{\frac{n}{5} - 1} > 2^{\frac{n}{6}}.\] 
It follows that for any $G_1$ for which $r(G_1) \leq 2^{\frac{n}{6}}$ there is a graph $G$ with at most 
$\frac{n^2}{5}$ edges such that $G_1 \subseteq G$ and $G$ is Ramsey saturated. Otherwise, starting from $G_1$ and adding edges which do not increase the Ramsey number one 
by one, we could find a sequence of graphs $G_1 
\subset G_1 \cup \{e\} \subset \cdots \subset G_2$ up to a graph $G_2$ with $\frac{n^2}{5}$ edges such that $r(G_1) = r(G_1\cup\{e\}) = \cdots = r(G_2)$, contradicting 
our estimate for the Ramsey number of graphs $G_2$ with $\frac{n^2}{5}$ edges.

Since any graph $G$ with at most $\frac{n^2}{5}$ edges contains at most $\binom{n^2/5}{n^2/20}$ labeled subgraphs of size $n^2/20$, we see that the number of 
labeled Ramsey saturated graphs is at least \begin{align*} \frac{\frac{1}{2} \binom{n^2/4}{n^2/20}}{\binom{n^2/5}{n^2/20}} & = \frac{1}{2} \frac{\frac{n^2}{4} \left(\frac{n^2}{4} - 
1\right) \cdots \left(\frac{n^2}{4} - \frac{n^2}{20} + 1\right)}{\frac{n^2}{5} \left(\frac{n^2}{5} - 1\right) \cdots \left(\frac{n^2}{5} - \frac{n^2}{20} + 1\right)}\\ & \geq 
\frac{1}{2} \left(\frac{5}{4}\right)^{n^2/20} \geq 2^{n^2/70}, \end{align*} for $n$ sufficiently large. Dividing through by $n!$ tells us that the number of 
non-isomorphic Ramsey saturated graphs is, again for $n$ sufficiently large, at least $2^{n^2/80}$, as required. {\hfill$\Box$\medskip}

We note that this proof easily extends to more than $2$ colors by using the appropriate analogues of Lemmas \ref{lem:lowerRamsey} and \ref{lem:DRCRamsey}. We omit the details.

Balister, Lehel and Schelp \cite{BLS06} also conjecture that almost all graphs should be Ramsey unsaturated. While they prove that this is the case for paths and cycles of length at 
least five (a result which was recently extended by Skokan and Stein \cite{SS12}), we feel that the truth probably lies in the other direction, that is, that almost all graphs should 
be Ramsey saturated. However, it would still be very interesting to find further classes of Ramsey unsaturated graphs.

\section{Degeneracy and online Ramsey theory} \label{sec:degeneracy}

Online Ramsey games were first introduced by Beck \cite{B93} and, independently, by Friedgut, Kohayakawa, R\"odl, Ruci\'nski and Tetali \cite{FKRRT03} (see also \cite{KR05}). There are 
two players, Builder and Painter, playing on a board consisting of an infinite, independent set of vertices. At each step, Builder exposes an edge and, as each edge appears, Painter 
decides whether to color it red or blue. Builder's aim is to force Painter to draw a monochromatic copy of a fixed graph $G$.

These games have been studied from multiple perspectives. One variant asks for the least number of edges $\tilde{r}(G)$ which Builder needs to force Painter to draw a monochromatic $G$ 
(see \cite{C092} and its references). Another asks how long the game lasts if the game is played on a board with $n$ vertices and Builder chooses the edges at random (see \cite{MSS09, 
MSS092} and their references).

We will consider another variant, first introduced by Grytczuk, Ha\l uszczak and Kierstead \cite{GHK04}. Suppose that we have a positive integer-valued increasing graph parameter such 
as chromatic number, degeneracy, treewidth, thickness or genus. The question asked in \cite{GHK04} is whether, for each of these properties, there exists a function $f: \mathbb{N} 
\rightarrow \mathbb{N}$ such that Builder can force Painter to draw a monochromatic copy of any graph $G$ with parameter $k$ while himself only drawing graphs with parameter $f(k)$. To 
give a concrete example, it was proved in \cite{GHK04} (and extended in \cite{KK09} to any number of colors) that Builder may force Painter to draw a monochromatic copy of any graph 
$G$ with chromatic number $k$ while only drawing graphs of chromatic number $k$ himself.

Here we prove a similar result where the chosen parameter is degeneracy, partially addressing one of the questions raised by Grytczuk, Ha\l uszczak and Kierstead \cite{GHK04}. A graph 
is {\it $d$-degenerate} if every subgraph of it has a vertex of degree at most $d$. Equivalently, a graph $G$ is $d$-degenerate if there is an ordering of the vertices of $G$, say $u_1, u_2, 
\dots, u_n$, such that for each $1 \leq i \leq n$ the vertex $u_i$ has at most $d$ neighbors $u_j$ with $j < i$. The {\it degeneracy} of $G$ is the smallest $d$ for which $G$ is 
$d$-degenerate. We will consider the $q$-color online Ramsey game, where Painter has a choice of $q$ colors at each step, proving the following result.

\begin{theorem} \label{thm:onlinedeg}
In the $q$-color online Ramsey game, Builder may force Painter to draw a monochromatic copy of any $d$-degenerate graph while only drawing a $(qd - (q-1))$-degenerate graph.
\end{theorem}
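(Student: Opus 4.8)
The plan is to prove the statement by induction on the number of colours $q$. When $q=1$ there is nothing to do: Builder simply draws a copy of $G$ edge by edge, which is $d$-degenerate $=\bigl(1\cdot d-(1-1)\bigr)$-degenerate, and one colour makes it vacuously monochromatic. So assume $q\ge 2$ and that the claim holds for $q-1$ colours. I would fix a degeneracy ordering $u_1,\dots,u_n$ of $G$ and let $u$ be a vertex of degree at most $d$, which exists because $G$ is $d$-degenerate. Builder reserves a fresh ``hub'' vertex $v$ as the eventual image of $u$ and, to begin, draws a large number of pendant edges at $v$. By pigeonhole, for any target size we wish, the set $S$ of leaves whose edge to $v$ received some fixed colour $c$ can be made that large; crucially, \emph{every} vertex of $S$ is joined to $v$ in colour $c$. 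From then on Builder works inside $S$.

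Inside $S$, Builder runs the $(q-1)$-colour strategy given by the induction hypothesis — applied to the graph $G$ itself — while \emph{pretending the colour $c$ does not exist}. If Painter never uses $c$ on an edge of $S$, this forces a copy of $G$ monochromatic in one of the other $q-1$ colours and we are finished. The delicate case is when Painter does colour some edge $e=ab$ inside $S$ with the forbidden colour $c$. Since $a,b\in S$, the triangle $\{v,a,b\}$ is then monochromatic in colour $c$; more generally, every colour-$c$ edge appearing inside $S$ is, together with the hub $v$, a ``free'' piece of a colour-$c$ copy of $G$ (the hub supplies the image of $u$ and its $\le d$ incident edges at no cost, since $v$ sees all of $S$ in colour $c$). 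Builder therefore dovetails: he lays out the edges he draws inside $S$ so that those Painter colours with one of the $q-1$ permitted colours feed the inductive sub-strategy, while those Painter colours $c$ are accumulated — together with $v$ — towards a colour-$c$ copy of $G$. A potential argument should finish it: every move of Builder is progress for one of the two processes, and neither process can continue indefinitely without producing a monochromatic $G$. Organising this dovetailing carefully — in particular, arranging Builder's moves inside $S$ so that Painter's colour-$c$ ``gifts'' really can be routed into a single copy of $G$ minus $u$, and so that the inductive sub-strategy is not disrupted — is the part I expect to be the main obstacle.

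It then remains to bound the degeneracy of Builder's graph. List its vertices with $v$ first, then the vertices of $S$ in the order in which the inductive sub-strategy introduces them (with the neighbours of $u$ placed as early as possible, so that they are roots of the sub-structure), then the remaining pendants. The hub $v$ has back-degree $0$; a pendant not in $S$ has back-degree $1$; and a vertex of $S$ has one edge back to $v$ plus, by the induction hypothesis, at most $(q-1)d-(q-1)+1$ edges back into earlier vertices of $S$, giving back-degree at most $(q-1)d-(q-1)+2=qd-(q-1)-(d-2)\le qd-(q-1)$ once $d\ge 2$; a careful choice of the hub and of the ordering should handle the off-by-one and the case $d=1$ (forests, where Builder in fact draws only a forest) separately. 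Hence Builder's graph is $\bigl(qd-(q-1)\bigr)$-degenerate, as required. So the crux will be the dovetailing/recycling step of the second paragraph; the degeneracy accounting is then routine.
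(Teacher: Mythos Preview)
Your proposal has a genuine gap precisely at the point you yourself flag as ``the main obstacle'': the dovetailing step cannot be made to work as described. The inductive $(q-1)$-colour strategy is a deterministic function of Painter's responses on the edges that strategy specifies; it has no defined continuation when Painter replies with the forbidden colour $c$. So when you write that a $c$-coloured edge ``feeds'' the colour-$c$ process while a non-$c$ edge ``feeds the inductive sub-strategy'', you are assuming Builder can choose each edge to be simultaneously the next move of the inductive strategy \emph{and} a useful piece of a colour-$c$ copy of $G-u$. But Builder has no control over which role an edge will play --- Painter decides after the edge is drawn --- and the edge dictated by the inductive strategy has no reason to be useful for assembling $G-u$ in colour $c$. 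Concretely, Painter can simply colour every edge inside $S$ with $c$; then the inductive strategy never advances a single step, and the $c$-coloured edges you have accumulated are exactly the edges of whatever (possibly very non-$G$-like) graph the inductive strategy was trying to build, not a copy of $G-u$. Restarting on fresh vertices each time Painter plays $c$ does not help either: the $c$-edges end up scattered across many disjoint aborted attempts. The potential argument you sketch (``every move is progress for one of the two processes'') therefore fails: a move may be progress for neither.

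The paper takes a completely different route, with no induction on $q$. Builder constructs a nested sequence $V_0\supset V_1\supset\dots\supset V_t$ with $t=q(n-1)-(q-1)$, arranging at each step (via the hypergraph Ramsey theorem applied to an auxiliary $q^s$-colouring of the $s$-subsets of a large independent set, $s=qd-(q-1)$) that every $d$-subset of $V_i$ has a common neighbour in $V_{i-1}\setminus V_i$ in one fixed colour $C_i$. By pigeonhole some colour occurs at $n-1$ of the levels, and one then embeds $G$ greedily along its degeneracy ordering, one vertex per level. The degeneracy bound comes for free because at each step Builder only attaches fresh vertices of degree exactly $s$ to the current independent set. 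The point is that the hypergraph Ramsey step handles all $q$ colours at once, which is exactly what your hub-plus-induction scheme cannot do.
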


We note that the $d = 1$ case of this theorem was already proved by Grytczuk, Ha\l uszczak and Kierstead \cite{GHK04}. Since $1$-degenerate graphs are forests, the theorem in this case states that, for any fixed number of colors, Painter may force Builder to create a monochromatic copy of any forest while only building a forest. To prove Theorem \ref{thm:onlinedeg} in the general case, we will need to use the hypergraph version of Ramsey's theorem \cite{ER52, R30}. 

\begin{lemma}
For all natural numbers $k, \ell$ and $q$ with $\ell \geq k$, there exists an integer $n$ such that if the edges of the complete $k$-uniform hypergraph on $n$ vertices are 
$q$-colored, then there is a monochromatic copy of $K_\ell^{(k)}$. \hfill $\Box$
\end{lemma}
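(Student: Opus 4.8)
The final statement is the hypergraph version of Ramsey's theorem: for all natural numbers $k,\ell,q$ with $\ell\ge k$, there is an $n$ such that any $q$-coloring of the edges of the complete $k$-uniform hypergraph $K_n^{(k)}$ contains a monochromatic $K_\ell^{(k)}$. The plan is to prove this by induction on the uniformity $k$, with the $k=1$ case trivial (pigeonhole: among $q(\ell-1)+1$ vertices, one color class has $\ell$ vertices) and the $k=2$ case being the classical graph Ramsey theorem, which itself follows from the same inductive scheme. So the real content is the inductive step reducing uniformity $k$ to uniformity $k-1$.

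\textbf{Inductive step.} Assume the result holds for uniformity $k-1$ (for all values of the other parameters), and fix $\ell\ge k$ and $q$. Let $L = R^{(k-1)}(\ell;q)$ be the integer guaranteed by the induction hypothesis for $(k-1)$-uniform hypergraphs, $q$ colors, and clique size $\ell$; we will show $n$ can be taken large enough (an explicit but rapidly growing function of $L$, $k$, $q$) that the conclusion holds. The standard approach: build a sequence of vertices $v_1, v_2, \dots$ together with shrinking ``candidate'' sets $V_0 \supseteq V_1 \supseteq \cdots$, where $V_0$ is the whole vertex set. At stage $i$, pick any $v_i \in V_{i-1}$; each $(k-1)$-subset $S$ of $\{v_1,\dots,v_i\}$ that includes $v_i$ together with a vertex $w \in V_{i-1}\setminus\{v_i\}$ forms a $k$-set, and its color partitions $V_{i-1}\setminus\{v_i\}$; intersecting over all such $S$ (there are at most $\binom{i-1}{k-2}$ of them) refines $V_{i-1}$ into at most $q^{\binom{i-1}{k-2}}$ parts, and we let $V_i$ be the largest part. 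Provided $n$ was chosen large enough, we can continue this for $m$ steps with $m$ much larger than $L$; the precise bookkeeping is a routine (if tower-type) estimate which I will not grind through.

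\textbf{Finishing.} After $m$ steps we have vertices $v_1,\dots,v_m$ with the key property that the color of any $k$-set $\{v_{i_1},\dots,v_{i_k}\}$ with $i_1<\cdots<i_k$ depends only on $v_{i_1},\dots,v_{i_{k-1}}$ (not on $i_k$), by construction of the candidate sets. Thus we get a well-defined $q$-coloring $\chi'$ of the $(k-1)$-subsets of $\{v_1,\dots,v_{m-1}\}$. Taking $m-1 \ge L$ and applying the induction hypothesis, we obtain a set $W$ of $\ell$ vertices on which $\chi'$ is constant, say with color $c$. But then every $k$-subset of $W$ (appending any later vertex to a $(k-1)$-subset of $W$ — using that $W$ consists of vertices all of whose indices are at most $m-1$, and there remains at least one vertex $v_m$ of larger index, or more simply that $W$ itself has $\ell\ge k$ vertices so any $k$-subset $\{w_1,\dots,w_k\}$ of $W$ has color $\chi'(\{w_1,\dots,w_{k-1}\}) = c$) receives color $c$, so $W$ spans a monochromatic $K_\ell^{(k)}$.

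\textbf{Main obstacle.} The conceptual content is entirely in setting up the ``candidate set'' refinement so that colors of $k$-sets stabilize to a $(k-1)$-uniform coloring — this is the Erd\H{o}s--Rado stepping-up device. The only thing requiring care is the quantitative accounting: one must verify that starting from a sufficiently large $n$ (a $(k-1)$-fold iterated exponential in $L$), the candidate sets stay nonempty and indeed large enough to run for $m > L$ steps, since each step divides the current set size by up to $q^{\binom{i-1}{k-2}}$. This is a standard but slightly fiddly computation, and since the paper explicitly disclaims optimizing constants, a crude bound suffices and I would not belabor it.
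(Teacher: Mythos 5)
Your argument is the standard Erd\H{o}s--Rado induction on the uniformity $k$, and it is correct: the candidate-set refinement makes the color of each $k$-set depend only on its first $k-1$ vertices, after which the induction hypothesis for $(k-1)$-uniform hypergraphs finishes the job, and the deferred quantitative bookkeeping is indeed routine. The paper offers no proof to compare against --- it states this lemma as a known result, citing Ramsey and Erd\H{o}s--Rado --- and your write-up is precisely the classical argument from those sources.
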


The smallest such number $n$ is known as the {\it Ramsey number} $r_k(\ell; q)$.
\vspace{3mm}

{\bf Proof of Theorem \ref{thm:onlinedeg}:}
Suppose that $G$ is a $d$-degenerate graph with $n$ vertices. We will show that while only drawing a $(q d - (q-1))$-degenerate graph it is possible for Builder to force Painter to construct a sequence of subsets $V_0 \supset V_1 \supset \dots \supset V_t$ and a sequence of colors $C_1, \dots, C_t$, where $t = q(n-1) - (q-1)$, such that $|V_t| \geq d$ and the following holds. For every $d$-set $D$ in $V_i$ there is a vertex $v_D$ in $V_{i-1}$ such that every vertex $w \in D$ is connected to $v_D$ in color $C_i$. By the choice of $t$, the pigeonhole principle implies that there is a subsequence $V_0 = V_{i_0} \supset V_{i_1} \supset \dots \supset V_{i_{n-1}}$ such that $C_{i_1} = \dots = C_{i_{n-1}}$. Without loss of generality, we may assume that this color is red. 

It is straightforward to show that the constructed graph contains a red copy of $G$. Let $u_1, \dots, u_n$ be a $d$-degenerate ordering of the vertices of $G$, that is, such that $u_i$ has at most $d$ neighbors $u_j$ with $j < i$. We will construct an embedding $f: V(G) \rightarrow V$ of the vertices of $G$ into the red graph constructed above by induction, embedding the vertex $u_p$ into the set $V_{i_{n - p}}$. We begin by mapping $u_1$ to any vertex in $V_{i_{n-1}}$. Suppose now that $u_1, \dots, u_p$ have been embedded and we wish to embed $u_{p+1}$. We know that $u_{p+1}$ has $f \leq d$ neighbors $u_{a_1}, \dots, u_{a_f}$ with $a_j \leq p$. Since the images of each of these vertices under the embedding lie in $V_{i_{n-p}}$, we see, by taking $D = \{f(u_{a_1}), \dots, f(u_{a_f})\}$, that there is a vertex $w \in V_{i_{n-(p+1)}}$ such that the edge between $w$ and $f(u_{a_j})$ is red for all $1 \leq j \leq f$. Taking $f(u_{p+1}) = w$ completes the induction.

It remains to construct the subsets $V_0 \supset V_1 \supset \dots \supset V_t$. Let $n_t = d$ and, for each $1 \leq i \leq t$, let 
\[m_{t-i} = r_s (q d (n_{t - i + 1} + 1); q^s) \mbox{ and } n_{t - i} = m_{t-i} + \binom{m_{t-i}}{s},\]
where $s = q d - (q -1)$. We begin by taking an independent set of size $n_0$ for $V_0$. Suppose now that Builder has forced Painter to construct $V_0 \supset V_1 \supset \dots \supset V_{i-1}$ and that $V_{i-1}$ is an independent set of size $n_{i-1}$. We will show how Builder may force Painter to construct an independent set $V_i \subset V_{i-1}$ of size $n_i$ such that for every $d$-set $D$ in $V_i$ there is a vertex $v_D$ in $V_{i - 1}\char92 V_i$ such that every vertex $w \in D$ is connected to $v_D$ in a fixed color $C_{i}$. 

Suppose, therefore, that $V_{i-1}$ is an independent set of size $n_{i-1}$. By the choice of $n_{i-1}$, we may partition $V_{i-1}$ into two pieces $W_i$ and $V_{i-1} \char92 W_i$ of sizes $m_{i-1}$ and $\binom{m_{i-1}}{s}$, respectively. For each $s$-set $S$ in $W_i$, Builder now chooses a unique vertex $v_S$ in $V_{i-1}\char92 W_i$ (which is possible by the choice of size for $V_{i-1}\char92 W_i$) and joins every $w \in S$ to $v_S$. Moreover, $v_S$ will have no other neighbors in $V_{i-1}$. Note that $W_i$ is an independent set and every vertex in $V_{i-1} \char92 W_i$ has degree exactly $s = qd - (q-1)$. 

We now consider the complete $s$-uniform hypergraph on $W_i$. Suppose that the vertices of $W_i$ have been ordered. For any edge $e = \{w_1, \dots, w_s\}$ of this graph with $w_1 < \dots < w_s$, we assign it the color $(\chi(w_1 v_e), \dots, \chi(w_s v_e))$ where $v_e$ is the unique vertex in $V_{i-1} \char92 W_i$ joined to each of $w_1, \dots, w_s$ and $\chi(w_i v_e)$ is the color assigned by Painter to the edge between $w_i$ and $v_e$. This gives a $q^s$-coloring of the edges of the complete $s$-uniform hypergraph on $W_i$. By the choice of $m_{i-1}$, this set must contain a monochromatic subgraph of size $qd(n_i + 1)$. We call this set $M_i$.

For any edge $e = \{w_1, \dots, w_s\}$ in $M_i$ with $w_1 < \dots < w_s$, we know that $\chi(w_j v_e) = \chi_j$ for a fixed sequence of colors $\chi_1, \dots, \chi_s$. By the choice of $s$, we know that there must be a color $C_i$ and a set of indices $j_1, \dots, j_d$ such that $\chi(w_{j_k} v_e) = C_i$ for all edges $e$ and all $1 \leq k \leq d$. Suppose now that the vertices in $M_i$ are $w'_1 < \dots < w'_\ell$, where $\ell = q d (n_i + 1)$. Consider the subset of $M_i$ containing the vertices $w'_{qd}, w'_{2qd}, w'_{3qd}, \dots, w'_{n_i qd}$. Then, for any $1 \leq a_1 < \dots < a_d \leq n_i$, there exists an edge $e =  \{w_1, \dots, w_s\}$ of the complete $s$-uniform hypergraph on $M_i$ such that $w_{j_k} = w'_{a_k q d}$. This follows since the vertices in the subsequence are a distance $qd$ apart and we may place up to $qd - 1 \geq s$ dummy vertices between any pair (and before and after the first and the last terms in the sequence). Therefore, $\chi(w'_{a_k qd} v_e) = C_i$ for all $1 \leq k \leq d$. The set $V_i = \{w'_{qd}, w'_{2qd}, w'_{3qd}, \dots, w'_{n_i qd}\}$ has the required property that for every $d$-set $D$ in $V_i$ there is a vertex $v_D$ in $V_{i - 1}\char92 V_i$ such that every vertex $w \in D$ is connected to $v_D$ in color $C_{i}$.

To show that the graph constructed by Builder has a $(qd - (q - 1))$-degenerate ordering, we note that each vertex in $V_{i-1}\char92 W_i$ has no neighbors in $V_{i-1}\char92 W_i$ and degree $qd - (q-1)$ in $W_i$. Moreover, there are no edges between $V_i$ and $W_i\char92 V_i$. If, therefore, we choose our ordering so that for all $i = 1, 2, \dots, t$, the set of vertices in $V_i$ come before those in $W_i \char92 V_i$ and the set of vertices in $W_i \char92 V_i$ come before those in $V_{i-1} \char92 W_i$, we will have an $s$-degenerate ordering. This completes the proof.
{\hfill$\Box$\medskip}

In the other direction, it is obvious that Painter must draw a graph with degeneracy at least $d$. For $d = 1$, a matching upper bound is given by Theorem~\ref{thm:onlinedeg} and was first proved in \cite{GHK04}. It would be interesting to decide whether this simple lower bound is always sharp. It would also be interesting to know whether a similar theorem holds when degeneracy is replaced by maximum degree. This question, first studied in \cite{BGKMSW11}, appears difficult.

\section{Generalized Ramsey numbers for hypergraphs} \label{sec:erdosrogers}

Given natural numbers $s$ and $t$ with $s < t$, the {\it Erd\H{o}s--Rogers function} $f_{s,t}(n)$ is defined as the size of the largest $K_s$-free subset that may be found in any $K_t$-free graph on $n$ vertices. This function generalizes the usual Ramsey function, since determining $f_{2,t}(n)$ is the problem of determining the size of the largest independent set which is guaranteed in any $K_t$-free graph on $n$ vertices.

Since its introduction \cite{EG61, ER62} and particularly in recent years \cite{AK97, DR11, K94, K95, S051, S052}, this function has been studied quite intensively. Much of this work has focused on the case where $t = s+1$, culminating in the result \cite{DM12, DRR13, W13} that there are constants $c_1$ and $c_2$ depending only on $s$ such that
\[c_1 \left(\frac{n \log n}{\log \log n}\right)^{1/2} \leq f_{s, s+1} (n) \leq c_2 n^{1/2} (\log n)^{4 s^2}.\]
The analogous function for hypergraphs was recently studied by Dudek and Mubayi \cite{DM12}. For $s < t$, let $f_{s,t}^{(k)}(n)$ be given by
\[f_{s,t}^{(k)}(n) = \min\{\max\{|W|: W \subseteq V(\mathcal{G}) \mbox{ and $\mathcal{G}[W]$ contains no $K_s^{(k)}$}\}\},\]
where the minimum is taken over all $K_t^{(k)}$-free $k$-uniform hypergraphs $\mathcal{G}$ on $n$ vertices. Dudek and Mubayi proved that, for $k = 3$ and $3 \leq s <t$, this function satisfies
\[f_{s-1,t-1}^{(2)}(\lfloor \sqrt{\log n} \rfloor) \leq f_{s, t}^{(3)}(n) \leq c_s \log n.\]
In particular, for $t = s+1$, this gives constants $c_1$ and $c_2$ depending only on $s$ such that
\[c_1 (\log n)^{1/4} \left(\frac{\log\log n}{\log\log\log n}\right)^{1/2} \leq f_{s, s+1}^{(3)}(n) \leq c_2 \log n.\]
Here we make an improvement to the lower bound, using ideas on hypergraph Ramsey numbers developed by the authors in \cite{CFS10}.

We will need the following lemma, due to Shearer \cite{Sh95}, which gives an estimate for the size of the largest independent set in a sparse $K_s$-free graph.

\begin{lemma} \label{lem:Shearer}
There exists a constant $c_s$ such that if $G$ is a $K_s$-free graph on $n$ vertices with average degree at most $d$ then $G$ contains an independent set of size at least 
\[c_s \frac{\log d}{d \log \log d} n. \tag*{$\Box$}\]
\end{lemma}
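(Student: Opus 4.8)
The plan is to obtain this as Shearer's classical bound, proving first the triangle-free case $s=3$ with an essentially sharp constant, and then bootstrapping to general $s$ by induction on the clique number, at the cost of the $\log\log d$ factor. The only structural input is elementary: in a $K_s$-free graph the neighborhood of every vertex induces a $K_{s-1}$-free graph, and for $s=3$ this just says that every neighborhood is an independent set. As a baseline, the Caro--Wei bound already gives $\alpha(G)\ge\sum_v\frac{1}{d_v+1}\ge\frac{n}{\bar d+1}$ (where $\bar d\le d$ is the average degree), so the entire content is to gain the factor $\log d/\log\log d$ over this.

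For $s=3$ I would introduce Shearer's weight function $f$, with $f(0)=1$, $f(1)=1/2$ and $f(d)=\frac{d\ln d-d+1}{(d-1)^2}$ for $d\ge 2$; the facts I need are that $f$ is convex and strictly decreasing, that it satisfies the recurrence which drives the induction below, and that $f(d)=(1+o(1))\frac{\ln d}{d}$, so $f(d)\ge c_3\frac{\log d}{d}$ for an absolute $c_3>0$. The heart of the matter is the per-vertex bound $\alpha(G)\ge\sum_v f(d_v)$ for every triangle-free $G$, which I would prove by induction on $|V(G)|$. First dispose of isolated vertices and of star components directly (using $f(0)=1$ and $f(d)\le d/2$). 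Otherwise choose a vertex $v$ — the precise choice, and a short case analysis on the local structure around $v$, being exactly what the argument turns on — put $v$ into the independent set, and pass to $G'=G-N[v]$, which is again triangle-free; triangle-freeness is essential here, since it forces the neighbors of $v$ to be pairwise non-adjacent and pairwise without common neighbors, so that every surviving vertex $w$ loses precisely $|N(w)\cap N(v)|$ from its degree. This yields $\alpha(G)\ge 1+\sum_{w\notin N[v]}f\big(d_w-|N(w)\cap N(v)|\big)$, and closing the induction is then an averaging computation that balances the ``$+1$'' together with the increase of $f$ caused by these degree drops (via convexity of $f$) against the defining recurrence for $f$. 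Convexity of $f$ and Jensen finally give $\alpha(G)\ge\sum_v f(d_v)\ge n\,f(\bar d)\ge c_3\,n\,\frac{\log d}{d}\ge c_3\,n\,\frac{\log d}{d\log\log d}$, which is the claim with $c_s=c_3$.

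For $s\ge 4$ I would induct on $s$ with the triangle-free case as the base. It is convenient first to reduce to maximum degree at most $d$: at least $n/2$ vertices have degree at most $2\bar d$, so passing to the (still $K_s$-free) subgraph they span costs only a constant factor and replaces $d$ by $2d$. Now $G[N(v)]$ is $K_{s-1}$-free on at most $d$ vertices for every $v$, and one would like to say that either some neighborhood is sparse, in which case the inductive hypothesis applied inside it produces a large independent set of $G$, or all neighborhoods are dense, in which case one gains differently. Turning this dichotomy into a clean induction with the right constants, and verifying that the recursion on $s$ costs at most a $\log\log d$ factor, is the delicate part and is the main obstacle; I would follow Shearer's treatment here. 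For the paper itself one may of course simply cite \cite{Sh95}; the triangle-free case admits in addition a modern proof via the hard-core model, but the classical inductive route above is the most direct.
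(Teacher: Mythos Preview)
The paper does not prove this lemma at all: the $\Box$ tagged onto the displayed bound signals that the result is quoted from \cite{Sh95} without argument, exactly as you yourself note in your closing sentence. So there is nothing to compare against; for the paper's purposes, ``cite Shearer'' is the entire proof, and your final remark already matches it.

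As a sketch of Shearer's actual arguments your outline is broadly faithful, but one assertion in the $s=3$ case is false: triangle-freeness does \emph{not} force the neighbors of $v$ to be ``pairwise without common neighbors'' (a $4$-cycle is a counterexample). Fortunately the claim is also unnecessary: once you delete $N[v]$, each surviving $w\notin N[v]$ automatically has its degree drop by exactly $|N(w)\cap N(v)|$, simply because $w\notin N(v)$ already gives $v\notin N(w)$; what triangle-freeness buys is only that $N(v)$ is independent, which is what lets you control $\sum_{u\in N(v)} d_u$ in the averaging step. For $s\ge 4$ your write-up is candidly ``follow Shearer'': the neighborhood dichotomy and the bookkeeping that yields precisely one $\log\log d$ factor are the real content of \cite{Sh95}, and your outline does not reproduce them, so as a standalone proof it remains incomplete.
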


The main result of this section is now as follows.

\begin{theorem}
For any natural number $s \geq 3$, there exists a constant $c$ such that
\[f_{s, s+1}^{(3)} (n) \geq c \left(\frac{\log n}{\log \log \log n}\right)^{1/3}.\]
\end{theorem}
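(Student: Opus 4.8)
The plan is to reduce the theorem to a lower bound for the graph Erd\H{o}s--Rogers function $f_{s-1,s}^{(2)}$ --- namely the bound $f_{s-1,s}^{(2)}(N)\ge c_1\big(N\log N/\log\log N\big)^{1/2}$ quoted at the start of this section, which is itself a short consequence of Lemma~\ref{lem:Shearer} --- and to make that reduction as efficient as possible using the hypergraph selection technique of \cite{CFS10}.

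The first and main step is a \emph{structured subset lemma}: in any $K_{s+1}^{(3)}$-free $3$-uniform hypergraph $\mathcal{G}$ on $n$ vertices one can find vertices $a_1<a_2<\dots<a_M$ together with a graph $L$ on the index set $\{1,\dots,M\}$ so that for all $i<j<k$,
\[
\{a_i,a_j,a_k\}\in\mathcal{G}\ \Longleftrightarrow\ \{j,k\}\in L ;
\]
that is, membership of a triple is determined by (the ranks of) its two larger vertices alone. The naive way to produce this is a top-down greedy selection: having fixed $a_M,\dots,a_{i+1}$ and a large candidate set, choose $a_i$ arbitrarily and then pass to the largest subfamily of candidates $v$ sharing a common pattern of membership in the triples $\{v,a_i,a_j\}$ with $j>i$; this pins down the $i$-th row of $L$ at the cost of a factor $2^{M-i}$, so it succeeds provided $n\ge 2^{\binom{M}{2}}$, i.e.\ for $M$ of order $\sqrt{\log n}$ --- which would only reproduce the Dudek--Mubayi bound of order $(\log n)^{1/4}$. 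To do better I would use the more economical selection of \cite{CFS10}: since each later use of the structure below involves at most $s$ of the $a_i$ at a time, it suffices to enforce the ``determined by the two larger ranks'' property on bounded-size configurations, and the per-step loss can then be reduced enough that the selection survives for $\log n$ of order $M^{3/2}$, i.e.\ it produces a structured subset of size $M$ of order $(\log n)^{2/3}$ (this is the power the final exponent $1/3$ requires).

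Given such $A=\{a_1,\dots,a_M\}$ and $L$, I would first observe that $L$ is automatically $K_s$-free (after deleting $a_1$): an $s$-clique of $L$ on ranks $j_1<\dots<j_s$, together with the vertex $a_1$, would span a copy of $K_{s+1}^{(3)}$ in $\mathcal{G}$ --- every triple through $a_1$ is an edge because $j_1,\dots,j_s$ is a clique of $L$, and every triple inside $\{a_{j_1},\dots,a_{j_s}\}$ is an edge because its two larger ranks form an edge of $L$ --- contradicting the hypothesis. Now apply the graph Erd\H{o}s--Rogers argument to this $K_s$-free graph $L$ on $M-1$ vertices: if $L$ has a vertex of degree at least $d$ then its neighbourhood is a $K_{s-1}$-free graph on at least $d$ vertices, while if the average degree of $L$ is below $d$ then Lemma~\ref{lem:Shearer} gives an independent (hence $K_{s-1}$-free) set of size at least $c\,M\log d/(d\log\log d)$; balancing these at $d$ of order $\big(M\log M/\log\log M\big)^{1/2}$ yields a subset $B\subseteq A$ with $L[B]$ being $K_{s-1}$-free and $|B|\ge c'\big(M\log M/\log\log M\big)^{1/2}$.

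Finally, $\mathcal{G}[B]$ is $K_s^{(3)}$-free: if vertices $b_1<\dots<b_s$ in $B$ spanned a copy of $K_s^{(3)}$, then each triple $\{b_1,b_j,b_k\}$ with $1<j<k$ would lie in $\mathcal{G}$, so every pair $\{\mathrm{rk}(b_j),\mathrm{rk}(b_k)\}$ would lie in $L$ and $\{\mathrm{rk}(b_2),\dots,\mathrm{rk}(b_s)\}$ would be an $(s-1)$-clique of $L[B]$, contradicting the previous step. Thus $W=B$ is the required $K_s^{(3)}$-free set, and with $M$ of order $(\log n)^{2/3}$ we get
\[
|W|\ \ge\ c'\Big(M\,\frac{\log M}{\log\log M}\Big)^{1/2}\ =\ \Omega\!\left(\Big(\frac{\log n}{\log\log\log n}\Big)^{1/3}\right),
\]
which is the theorem (the case $s=3$ is just the assertion that a $K_4^{(3)}$-free hypergraph has an independent set of this size, ``$K_{s-1}$-free'' then meaning ``edgeless''). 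The one delicate point is the first step: extracting a structured subset of size of order $(\log n)^{2/3}$ rather than the easy $(\log n)^{1/2}$; this is exactly where the refined selection method of \cite{CFS10} enters, while everything afterwards is the classical Erd\H{o}s--Rogers computation via Shearer's lemma together with routine bookkeeping.
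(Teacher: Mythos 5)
Your overall architecture is the right one and matches the paper's: a greedy vertex selection in the spirit of \cite{CFS10} producing a set on which membership of a triple is governed by an auxiliary graph $L$ on pairs, the observation that $L$ is $K_s$-free, the Shearer/Erd\H{o}s--Rogers dichotomy applied to $L$, and the translation ``$K_{s-1}$-free in $L$ $\Rightarrow$ $K_s^{(3)}$-free in $\mathcal{G}$'' (your last two paragraphs are essentially the paper's argument, up to the immaterial choice of whether the two larger or two smaller ranks govern a triple). The problem is that the entire quantitative content of the theorem sits inside your ``structured subset lemma,'' and that lemma, as you state it, is not established by the mechanism you describe and is quite possibly false as an unconditional statement. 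Your explanation of where the saving over the naive $2^{\binom{M}{2}}$ comes from --- that ``each later use of the structure involves at most $s$ of the $a_i$ at a time, so it suffices to enforce the property on bounded-size configurations'' --- is not how the gain is obtained, and it does not help: the Shearer step needs $L$ to be a genuine graph on all of $[M]$, i.e.\ you need the all-or-nothing property for \emph{every} pair, so there is no reduction to bounded-size configurations.

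The actual source of the saving is that the selection and the Erd\H{o}s--Rogers dichotomy must be \emph{interleaved}, not run in sequence. One fixes a degree cap $p$ in advance and stops the construction the moment some vertex of the auxiliary graph reaches degree $p$ (its neighborhood is then already the desired $K_s^{(3)}$-free set). If the process never stops, the auxiliary graph on $m$ vertices has at most $pm/2$ edges, and it is exactly this sparsity that makes the selection cheap: each refinement step either keeps a common neighborhood of relative size $\geq\alpha$ (an ``edge'' step, of which there are at most $pm/2$) or passes to its complement, losing only a factor $1-\alpha$ (a ``non-edge'' step, of which there are up to $\binom{m}{2}$). Optimizing $\alpha$ gives a total loss of $2^{-O(pm\log(m/p))}$, which is what permits $m\approx p^2\log\log p/\log p$ with $p\approx(\log n/\log\log\log n)^{1/3}$. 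Decoupled from the degree cap, the selection has no sparsity to exploit and you are back at $M=O(\sqrt{\log n})$. A secondary symptom of the gap: if an unconditional structured set of size $M=\Theta((\log n)^{2/3})$ existed, your final computation would give a bound \emph{better} than the theorem by a factor $(\log\log n)^{1/2}$; the correct coupled analysis loses exactly those logarithmic factors in the selection cost.
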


\begin{proof}
Let $\mathcal{G}$ be a $K_{s+1}^{(3)}$-free $3$-uniform hypergraph on $n$ vertices. Let 
\[p = c \left(\frac{\log n}{\log \log \log n}\right)^{1/3},\] 
where $c$ is a constant to be determined later. Our aim will be to show that $\mathcal{G}$ contains a $K_s^{(3)}$-free subgraph of size at least $p$. 

We will construct, by induction, a sequence of vertices $v_1, v_2, \dots, v_\ell$ and a non-empty set $V_{\ell}$ such that, for any $1 \leq i < j \leq \ell$, all triples $\{v_i, v_j, v_k\}$ with $j < k \leq \ell$ and all triples $\{v_i, v_j, w\}$ with $w \in V_{\ell}$ are either all edges of $\mathcal{G}$ or all not edges of $\mathcal{G}$.  At each step, we consider the auxiliary graph $G_{\ell}$ on vertex set $\{v_1, \dots, v_{\ell}\}$ formed by connecting $v_i$ and $v_j$ with $i < j$ if and only if all triples $\{v_i, v_j, v_k\}$ with $j < k \leq \ell$ and all triples $\{v_i, v_j, w\}$ with $w \in V_{\ell}$ are edges of $\mathcal{G}$.  We note that $G_{\ell}$ must be $K_s$-free. Otherwise, if $\{v_{i_1}, \dots, v_{i_s}\}$ are the vertices of a $K_s$, we may take any vertex $w$ in $V_{\ell}$ to form a $K_{s+1}^{(3)}$ in $\mathcal{G}$, namely, $\{v_{i_1}, \dots, v_{i_s}, w\}$.

If at any point $G_{\ell}$ contains a vertex $u$ with at least $p$ neighbors, we stop the process. Since $G_{\ell}$ is $K_s$-free, it follows that the neighborhood $U$ of $u$ in $G_{\ell}$ is $K_{s-1}$-free. The set $U$ must also be $K_s^{(3)}$-free in $\mathcal{G}$. Indeed, suppose otherwise and that $\{v_{i_1}, \dots, v_{i_s}\}$ with $i_1 < \dots < i_s$ is a $K_s^{(3)}$ with vertices from $U$. Then, by the construction of $G_{\ell}$, the set $\{v_{i_1}, \dots, v_{i_{s-1}}\}$ must be a $K_{s-1}$ in $U$. We may therefore assume that the maximum degree in each $G_{\ell}$ is at most $p$. 

To begin our induction, we fix $v_1$ and let $V_1 = V(\mathcal{G})\char92 \{v_1\}$. Suppose now that we have constructed the sequence $v_1, v_2, \dots, v_{\ell}$ and a set $V_{\ell}$ satisfying the required conditions and we wish to find a vertex $v_{\ell + 1}$ and a set $V_{\ell + 1}$. We let $v_{\ell + 1}$ be any vertex from the set $V_{\ell}$. Let $V_{\ell,0} = V_\ell \char92 \{v_{\ell + 1}\}$. We will construct a sequence of subsets $V_{\ell,0} \supset V_{\ell, 1} \supset \dots \supset V_{\ell, \ell}$ such that all triples $\{v_i, v_{\ell + 1}, w\}$ with $1 \leq i \leq j$ and $w \in V_{\ell, j}$ are either all edges of $\mathcal{G}$ or all not edges of $\mathcal{G}$, depending only on the value of $i$. (Note that since each $V_{\ell, j} \subset V_{\ell}$ it follows that all triples $\{v_i, v_j, w\}$ with $1 \leq i < j \leq \ell$ and $w \in V_{\ell, j}$ are either all edges of $\mathcal{G}$ or all not edges of $\mathcal{G}$, depending only on the values of $i$ and $j$.)

Suppose then that $V_{\ell, j}$ has been constructed in an appropriate fashion. To construct $V_{\ell, j+1}$, we consider the neighborhood of the vertices $v_{j+1}$ and $v_{\ell + 1}$ in $V_{\ell, j}$. If this neighborhood has size at least $\alpha |V_{\ell, j}|$, we let $V_{\ell, j+1}$ be this neighborhood. Otherwise, we let $V_{\ell, j+1}$ be the complement of this neighborhood in $V_{\ell, j}$. Note that in this case $|V_{\ell, j+1}| \geq (1 - \alpha) |V_{\ell, j}|$. To finish the construction of $V_{\ell + 1}$, we let $V_{\ell + 1} = V_{\ell, \ell}$. It is easy to check that it satisfies the required conditions.  

We halt the process when $\ell = m$. Recall that the maximum degree of $G_m$ is at most $p$. Since $G_m$ is also $K_s$-free, Lemma \ref{lem:Shearer} implies that there is a constant $c_s$ such that the graph contains an independent set of size at least 
\[c_s \frac{\log p}{p \log \log p} m = p,\]
by choosing $m = \frac{1}{c_s} \frac{\log\log p}{\log p} p^2$. This in turn implies an independent set of size $p$ in $\mathcal{G}$, completing the proof provided only that $|V_m| \geq 1$.  To verify this, note that if in $G_i$ the vertex $v_i$ has degree $d_i$ then
\[|V_i|  \geq \alpha^{d_i} (1 - \alpha)^{i - 1 - d_i} (|V_{i-1}| - 1) \geq \alpha^{d_i} (1 - \alpha)^{i - 1 - d_i} |V_{i-1}| - 1.\]
Telescoping over all $i = 2, \dots, m$, it follows that, for $\alpha \leq \frac{1}{2}$,
\begin{eqnarray*}
|V_m| & \geq & \alpha^{\sum_{i=2}^m d_i} (1 - \alpha)^{\binom{m}{2} - \sum_{i=2}^m d_i} n - m\\
& \geq & \alpha^{\frac{pm}{2}} (1 - \alpha)^{\binom{m}{2} - \frac{pm}{2}} n - m.
\end{eqnarray*}
The second inequality follows by noting that $\sum_{i=2}^m d_i = e(G_m) \leq \frac{pm}{2}$ and observing that the function $\alpha^t (1- \alpha)^{\binom{m}{2} - t}$ is decreasing in $t$ for $\alpha \leq\frac{1}{2}$. Therefore, taking $\alpha = \frac{p}{m} \log(\frac{m}{p})$ (note that $\alpha \leq \frac{1}{2}$ for $n$ sufficiently large) and using that $2^{-2x} \leq 1 - x$ for $0 \leq x \leq \frac{1}{2}$, 
\begin{eqnarray*}
|V_m| & \geq & \alpha^{\frac{pm}{2}} (1 - \alpha)^{\frac{m^2}{2}} n - m \geq (p/m)^{\frac{pm}{2}} 2^{-\alpha m^2} n - m\\
& = & 2^{-\frac{3}{2} pm \log (m/p)} n - m \geq \sqrt{n} - m \geq 1.
\end{eqnarray*}
In the fourth inequality, we used that for $n$ sufficiently large depending on $c_s$, $\log(m/p) \leq \log p$ and, therefore, 
\[\frac{3}{2}pm \log (m/p) \leq \frac{3}{2c_s} p^3 \frac{\log \log p}{\log p} \log p = \frac{3}{2c_s} p^3 \log \log p.\]
Hence, since $p = c \left(\frac{\log n}{\log \log \log n}\right)^{1/3}$, for $c$ sufficiently small we have
\[\frac{3}{2}pm \log (m/p) \leq \frac{1}{2} \log n.\]
This completes the proof.
\end{proof}

This result easily extends to higher uniformities to give that $f_{s, s+1}^{(k)} (n) \geq (\log_{(k-2)} n)^{1/3 - o(1)}.$
Here $\log_{(0)} x = x$ and $\log_{(i+1)} x = \log (\log_{(i)} x)$. This improves an analogous result of Dudek and Mubayi~\cite{DM12} with a $1/4$ in the exponent but remains far from their upper bound $f_{s, s+1}^{(k)} (n) \leq c_{s,k} (\log n)^{1/(k-2)}$. It is an interesting open question to close the gap between the upper and lower bounds.

\section{Clique partitions of very dense graphs} \label{sec:partitions}

A {\it clique partition} of a graph $G$ is a collection of complete subgraphs of $G$ that partition the edge set of $G$.
The {\it clique partition number} $\cp(G)$ is the smallest number of cliques in a clique partition of $G$.  Despite receiving considerable attention over the last 60 years, this interesting graph parameter is still not well understood.

One class of graphs for which this parameter has been studied quite extensively is when the graph $G$ is the complement $\overline{F}$ of a sparse graph $F$. Orlin \cite{Or} was the first to ask about the asymptotics of the clique partition number for the complement of a perfect matching. If $F$ is a perfect matching on $n$ vertices, Wallis \cite{Wa} showed that $\cp(\overline{F}) = o(n^{1+\epsilon})$ for any $\epsilon>0$. This was later improved by Gregory, McGuinness and Wallis~\cite{GrMcWa} to  $\cp(\overline{F}) = O(n\log \log n)$. Wallis \cite{Wa1} later showed that the same bound holds if $F$ is a Hamiltonian path on $n$ vertices. For any forest $F$ on $n$ vertices, Cavers and Verstra\"ete \cite{CaVe} proved that $\cp(\overline{F})=O(n\log n)$. They also conjectured that there are forests $F$ on $n$ vertices such that $\cp(\overline{F})$ grows superlinearly in $n$. Here we extend their proof to give the following improvement.

\begin{theorem}\label{for}
If $F$ is a forest on $n$ vertices, then $\cp(\overline{F}) = O(n\log \log n)$.
\end{theorem}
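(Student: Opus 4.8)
The plan is to adapt the argument of Cavers and Verstra\"ete that gives $\cp(\overline F) = O(n\log n)$, replacing the crude bound at one step by the sharper $O(n\log\log n)$ bound known for the complement of a perfect matching (via Gregory, McGuinness and Wallis), used recursively. The first step is a standard reduction: since $F$ is a forest on $n$ vertices, we may $2$-color the edges of $F$ so that each color class is a union of stars — equivalently, properly $2$-color (say) the vertices by bipartition class of each tree component and orient each edge from one class to the other; then $F = F_1 \cup F_2$ where $F_i$ is a star forest. It suffices to partition the edges of $\overline F$ into $O(n\log\log n)$ cliques, and one convenient route is: first handle $\overline{F_1}$-type structure, then patch in the edges missing because of $F_2$.

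The core of the argument is the case where $F$ is a single star forest, i.e.\ a disjoint union of stars $S_1, \dots, S_k$ with centers $c_1, \dots, c_k$. In $\overline F$, each non-center vertex is adjacent to everything except its own center, and the centers form a clique together with each other and with all non-leaves of other stars. The idea is to take the complete graph $K_n$ on the vertex set (which has a trivial clique partition into one clique) and then \emph{subtract}: we need a clique partition of $\overline F$, so I would instead think of it as covering $K_n$ minus a star forest. Group the leaves by their center; the subgraph to be removed is a disjoint union of complete bipartite-ish deficiencies, and the key point is that $\overline F$ restricted to the leaves of a single star $S_i$ together with everything else is $K_n$ minus a single vertex's worth of edges to a set — which is exactly the "complement of a matching / union of stars" regime already controlled. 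Concretely, for each star one pays $O((\text{size})\log\log(\text{size}))$ cliques by the Gregory--McGuinness--Wallis-type bound applied to $\overline{K_{1,t}}$ embedded appropriately, and summing $\sum_i t_i \log\log t_i \le n\log\log n$ since $\sum t_i \le n$ and $\log\log$ is concave/increasing. One must also account for the clique(s) on the centers and the cross edges between distinct stars, but those form large cliques or are absorbed into the $K_n$-on-a-subset pieces at negligible cost.

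Assembling: write $\overline F = \overline{F_1} \cap \overline{F_2}$ as subgraphs of $K_n$; take a clique partition $\mathcal Q_1$ of $\overline{F_1}$ using $O(n\log\log n)$ cliques as above, then for each clique $Q \in \mathcal Q_1$ refine $Q$ by deleting the edges of $F_2$ inside it. Since $F_2$ is also a star forest, $Q \setminus E(F_2)$ is the complement within $K_{|Q|}$ of a star forest on $|Q|$ vertices, which again costs $O(|Q|\log\log|Q|)$ cliques; but naively $\sum_{Q} |Q|\log\log|Q|$ could be too large if the $|Q|$ are small and numerous, so the refinement must be done more carefully — only a few cliques of $\mathcal Q_1$ are large, and on the small cliques $F_2$ deletes $O(1)$ edges, costing $O(1)$ each, for $O(n\log\log n)$ total. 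The main obstacle, and the place requiring genuine care, is exactly this bookkeeping: ensuring that the two-layer construction (handle $F_1$, then remove $F_2$) does not multiply the $\log\log n$ factors or blow up the count on the proliferation of small cliques. Getting the concavity estimate $\sum x_i \log\log x_i = O(n\log\log n)$ to apply at both layers — rather than once — is the crux, and it is handled by noting the edges of $F_2$ are sparse (only $n$ of them) so they can only damage $O(n)$ cliques by $O(\log\log n)$ each.
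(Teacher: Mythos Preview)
Your approach has a genuine gap in the refinement step, and it is not easily repaired.

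Grant for the moment your (unproven) claim that $\cp(\overline{F_1}) = O(n\log\log n)$ for a star forest $F_1$, and let $\mathcal{Q}_1$ be such a clique partition. You now want to delete the edges of $F_2$ from each clique $Q \in \mathcal{Q}_1$ and repartition. The trouble is the cost of repairing a single damaged clique. If $Q$ is a clique of size $|Q|$ and you delete even one edge $uv$ from it, any clique partition of $Q \setminus \{uv\}$ requires at least $|Q|-1$ cliques (for instance: the clique on $Q\setminus\{u\}$ plus the $|Q|-2$ individual edges from $u$), not $O(\log\log n)$. More generally, deleting $e_Q$ edges costs $\Theta(e_Q |Q|)$ by the obvious bound (Lemma~\ref{obvious} in the paper), so your total cost for refinement is on the order of $\sum_Q e_Q |Q|$. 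Since $\mathcal{Q}_1$ partitions $\binom{n}{2} - O(n)$ edges into $O(n\log\log n)$ cliques, the typical $|Q|$ must be of order $\sqrt{n/\log\log n}$, and with up to $n$ edges of $F_2$ spread among them, $\sum_Q e_Q |Q|$ is of order $n^{3/2}$, not $n\log\log n$. Your assertion that each damaged clique costs only $O(\log\log n)$ is simply false, and the concavity bookkeeping you invoke does not address this.

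A secondary issue is that the star-forest bound itself is not established by your sketch. The Gregory--McGuinness--Wallis result is for perfect matchings; reducing a star forest to a matching by contracting each leaf set to a single vertex does produce the complement of a matching, but un-contracting destroys the partition property (an internal edge of a contracted leaf set gets covered once for every clique containing that super-vertex). Your per-star accounting ``$\sum t_i \log\log t_i$'' handles only edges inside each star's vertex set, not the $\Theta(n^2)$ cross edges between stars, which is where all the difficulty lies.

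The paper's argument is completely different and avoids any two-layer refinement. It first reduces forests to trees (adding an edge changes $\cp$ by at most one), then uses a tree-partition lemma to cut $T$ into $O(\sqrt{n})$ subtrees each of size at most $\sqrt{n}$, and embeds $T$ into a projective plane $P_q$ with $q \approx 3\sqrt{n}$ so that each subtree lies on its own line. The $q^2+q+1 = O(n)$ lines of $P_q$ give a clique partition of $K_n$; lines not carrying a subtree are used as is, and on each line carrying a subtree $T_i$ one recurses. This yields the recurrence $g(n) \le O(n) + \sum_i g(|T_i|)$ with $|T_i| \le \sqrt{n}$ and $\sum_i |T_i| \le n + O(\sqrt{n})$, which solves to $g(n) = O(n\log\log n)$. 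The projective-plane embedding is exactly what lets the subproblems be vertex-disjoint within their lines, so there is no cross-term blow-up of the sort that sinks your refinement step.
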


A {\it Steiner $(n,k)$-system} is a family of $k$-element subsets of an $n$-element set such that each pair of elements appears in exactly one of the subsets. One can view a Steiner $(n,k)$-system as a clique partition of $K_n$ into cliques of size $k$. It is an open problem to determine for which pairs $(n,k)$ a Steiner $(n,k)$-system exists. Necessary conditions for the existence of a Steiner $(n,k)$-system are $n \equiv 1$ mod $k-1$ and $n(n-1) \equiv 0$ mod $k(k-1)$. Wilson \cite{Wil74} showed that for each $k$ there is $n(k)$ such that the necessary conditions for the existence of a Steiner $(n,k)$-system are sufficient provided that $n \geq n(k)$. An explicit upper bound on $n(k)$ which is triple exponential in $k^2\log k$ was proved by 
Chang \cite{Cha}. 

For sparse graphs $F$, Cavers and Verstra\"ete \cite{CaVe} proved an upper bound on the clique partition number $\cp(\overline{F})$ which is conditional on the existence of certain Steiner systems. They prove that if $F$ is a graph on $n$ vertices with maximum degree $\Delta = o(n/\log^4 n)$ and there is a Steiner $(n,k)$-system with $k=\lfloor \left(\frac{n}{2\Delta}\right)^{1/2} \rfloor$ then $\cp(\overline{F})=O\left(n^{3/2}\Delta^{1/2}\log^2 n\right)$. They also conjectured that if the maximum degree of $F$ is $o(n)$ then $\cp(\overline{F})=o(n^2)$. This conjecture follows from the next theorem, which gives an unconditional improvement on their bound. 

\begin{theorem}\label{maintheorem}
If $F$ has $n$ vertices and $m \geq \sqrt{n}$ edges, then $\cp(\overline{F}) = O\left((mn)^{2/3}\right).$
\end{theorem}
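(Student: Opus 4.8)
The plan is to prove the bound by a recursive decomposition of $\overline{F}$ built from a \emph{transversal design}, using two elementary observations. The first is that $\cp$ is monotone under induced subgraphs: if $\overline F$ is an induced subgraph of some $\overline{F'}$, then restricting each clique of a clique partition of $\overline{F'}$ to $V(F)$ yields a clique partition of $\overline F$, so $\cp(\overline F)\le\cp(\overline{F'})$; in particular we may freely add isolated vertices to $F$. The second is that if $H$ is a graph on $k$ vertices, then $\overline H$ (that is, $K_k$ with the edges of $H$ deleted) has a clique partition of size at most $1+2k\,e(H)$: take the clique on the at least $k-2e(H)$ vertices not meeting $H$, and cover each remaining edge — all of which are incident to one of the at most $2e(H)$ vertices that do meet $H$ — by a single edge. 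We also use the standard fact that a transversal design $TD(k,g)$ (equivalently, $k-2$ mutually orthogonal Latin squares of order $g$) exists whenever $g$ is a prime power with $g\ge k-1$; its $g^2$ blocks are $k$-element transversals of the $k$ groups of size $g$, and every pair of points lying in distinct groups is in exactly one block, so the blocks form cliques of $K_{kg}$ that partition the cross-group edges.

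I would prove, by strong induction on $n$, the statement $\cp(\overline F)=O\!\left(\max\{(mn)^{2/3},n\}\right)$ for all $m\ge1$. If $m\ge\binom n2/100$ this is immediate from $\cp(\overline F)\le\binom n2$. If $m<c_0\sqrt n$ for a suitable absolute constant $c_0$, pad $F$ to a graph on $q^2$ vertices with $q$ a prime of order $\Theta(\sqrt n)$ and use the Steiner system formed by the lines of the affine plane $AG(2,q)$: it has $q^2+q$ blocks of size $q$, each edge of $F$ lies in exactly one of them, and the second observation gives $\cp(\overline F)\le (q^2+q)+2qm=O(n+\sqrt n\cdot m)=O(n)$. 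So assume from now on $c_0\sqrt n\le m<\binom n2/100$.

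Put $k=\lceil(n^2/m)^{1/3}\rceil$. With $c_0$ chosen appropriately, $m\ge c_0\sqrt n$ forces $k^2\le n$ with room to spare, hence $n/k\ge k$; pick a prime power $g$ within a constant factor of $n/k$ with $g\ge k-1$, pad $F$ to a graph $F'$ on $n'=kg=\Theta(n)$ vertices partitioned into $k$ groups of size $g$, and apply $TD(k,g)$. Its $g^2$ blocks are pairwise edge-disjoint cliques of $K_{n'}$ covering exactly the pairs from distinct groups; replacing each block that meets $j$ edges of $F'$ by at most $1+2kj$ cliques (second observation), and noting that the cross-group edges of $F'$ are spread among the blocks, this covers all cross-group edges of $\overline{F'}$ using at most $g^2+2km=O(n^2/k^2+km)=O((mn)^{2/3})$ cliques by the choice of $k$. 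It remains to partition the within-group edges of $\overline{F'}$, i.e.\ to solve the same problem in each group: $k$ instances on $g$ vertices with $m_i$ deleted edges and $\sum_i m_i\le m$. Groups with $m_i<c_0\sqrt g$ cost $O(g)$ each by the base case, totalling $O(kg)=O(n)=O((mn)^{2/3})$ since $m\ge c_0\sqrt n$; for groups with $m_i\ge c_0\sqrt g$ the induction hypothesis gives $\cp\le C(gm_i)^{2/3}$, and summing via H\"older's inequality $\sum_i m_i^{2/3}\le k^{1/3}\big(\sum_i m_i\big)^{2/3}$ together with $g=O(n/k)$ bounds their total by $O\!\left((mn)^{2/3}/k^{1/3}\right)$. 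This last term is a vanishing fraction of the target once $k$ exceeds a large enough absolute constant, and the complementary case (bounded $k$, i.e.\ $m$ comparable to $\binom n2$) was already handled; choosing $C$ large enough closes the induction, and for $m\ge\sqrt n$ the maximum $\max\{(mn)^{2/3},n\}$ equals $(mn)^{2/3}$.

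The step I expect to be the real obstacle is simply arranging for a decomposition of the right shape to exist: one needs a design whose block size can be tuned to the optimum $\Theta\big((n^2/m)^{1/3}\big)$ for every admissible pair $(n,m)$, and exact Steiner systems are useless here since Wilson's theorem requires $n$ triple-exponential in the block size. Transversal designs from mutually orthogonal Latin squares over a prime field avoid this — and, pleasantly, the condition $g=\Theta(n/k)\gtrsim k$ needed for $TD(k,g)$ to exist is exactly the hypothesis $m\ge\sqrt n$. The other point needing care is organizing the recursion so that the within-group subproblems, summed by convexity, consume only a $k^{-1/3}$ fraction of the budget; otherwise one would lose a constant factor at each of the $\Theta(\log\log n)$ levels of recursion, which is roughly the origin of the extra $\log\log n$ in the forest bound of Theorem~\ref{for}.
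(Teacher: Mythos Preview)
Your argument is correct and close in spirit to the paper's, but organized differently. The paper first isolates a clean auxiliary result (Lemma~\ref{useful}): for any $2\le k\le\sqrt n$, the complete graph $K_n$ admits an edge-partition into $O((n/k)^2)$ cliques each of size at most $k$, proved by placing the $k$ parts of a Tur\'an graph on $k$ lines of a projective plane $P_q$ with $q\approx n/k$, covering the cross-part edges with the remaining lines, and recursing within parts with $k$ held \emph{fixed}. Theorem~\ref{maintheorem} then follows in one stroke: set $k=(n^2/m)^{1/3}$, apply the lemma, and within each resulting clique containing $t_i$ edges of $F$ pay at most $2t_ik$ via your ``second observation''; the total is $O((n/k)^2)+2mk=O((mn)^{2/3})$, with no recursion on the theorem itself. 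By contrast, you recurse directly on the statement of the theorem, re-optimizing $k$ at every level and tracking how $m$ splits among the groups --- which is why you need H\"older to show the subproblem costs consume only a $k^{-1/3}$ fraction of the budget and the induction closes. The paper's decoupling (partition $K_n$ first, deal with $F$ only once at the end) avoids this bookkeeping entirely and produces Lemma~\ref{useful} as a reusable byproduct; your route is more self-contained but does extra work. Both rely on the same finite-geometry input --- you take transversal designs and affine planes where the paper takes projective planes, but either suffices, and your remark that the hypothesis $m\ge\sqrt n$ is exactly what forces $k\lesssim\sqrt n$ so that the relevant design exists mirrors the paper's use of that range in Lemma~\ref{useful}. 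One small wrinkle: your bounded-$k$ fallback needs $m\ge cn^2$ for a constant $c$ that may be smaller than $1/100$, so the threshold in your first base case should be lowered to match.
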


We have the following immediate corollary, showing that we can partition the complement of a sparse graph into few cliques.

\begin{corollary}
If $F$ has $n$ vertices and $o(n^2)$ edges, then $\cp(\overline{F}) = o(n^2)$.
\end{corollary}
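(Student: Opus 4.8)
The plan is to deduce the corollary directly from Theorem~\ref{maintheorem}, splitting on the number of edges $m$ of $F$. Write $m = o(n^2)$, so that for every $\varepsilon > 0$ and all sufficiently large $n$ we have $m \le \varepsilon n^2$.

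First I would handle the main regime $m \ge \sqrt{n}$. Here Theorem~\ref{maintheorem} applies and gives $\cp(\overline{F}) = O\big((mn)^{2/3}\big)$. Since $mn = o(n^3)$, this is $o(n^2)$; concretely, if $C$ is the absolute constant from the theorem then $\cp(\overline{F}) \le C (mn)^{2/3} \le C \varepsilon^{2/3} n^2$ for $n$ large, and letting $\varepsilon \to 0$ gives the bound.

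The only case not covered by the theorem is $m < \sqrt{n}$, and here I would use a short padding argument. Let $F'$ be a graph on the same vertex set obtained from $F$ by adding non-edges until it has exactly $\lceil \sqrt{n} \rceil$ edges (possible once $n$ is large, since $\binom{n}{2} \ge \lceil\sqrt n\rceil$). Then $\overline{F}$ is the edge-disjoint union of $\overline{F'}$ and the set $E(F') \setminus E(F)$ of at most $\sqrt{n}$ added edges, so concatenating any clique partition of $\overline{F'}$ with one single-edge clique for each added edge yields $\cp(\overline{F}) \le \cp(\overline{F'}) + \sqrt{n}$. As $F'$ has $\lceil \sqrt{n}\rceil \ge \sqrt{n}$ edges, Theorem~\ref{maintheorem} gives $\cp(\overline{F'}) = O\big((\lceil\sqrt n\rceil \cdot n)^{2/3}\big) = O(n)$, and therefore $\cp(\overline{F}) = O(n) = o(n^2)$.

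In both cases $\cp(\overline{F}) = o(n^2)$, which is the claim. There is no real obstacle: the corollary is an immediate consequence of Theorem~\ref{maintheorem}, and the only points needing a moment's care are checking that the $o(\cdot)$ estimate composes correctly through the exponent $2/3$ and treating the boundary regime $m < \sqrt{n}$, where the hypothesis $m \ge \sqrt{n}$ of the theorem fails and the padding step is needed.
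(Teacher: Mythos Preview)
Your argument is correct and is exactly the intended route: the paper states the corollary as immediate from Theorem~\ref{maintheorem} without writing out a proof, and your derivation via $(mn)^{2/3}=o(n^2)$ is precisely that immediate deduction. Your extra care with the boundary regime $m<\sqrt{n}$ (padding $F$ up to $\lceil\sqrt{n}\rceil$ edges and noting that $\cp(\overline{F})\le \cp(\overline{F'})+\sqrt{n}$) is a clean way to handle the hypothesis of the theorem; one could alternatively observe that when $m<\sqrt{n}$ at most $2\sqrt{n}$ vertices of $\overline{F}$ fail to have full degree and apply Lemma~\ref{obvious} to get $\cp(\overline{F})<2n^{3/2}$, but either way this case is negligible.
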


To prove Theorem \ref{maintheorem}, we will first prove a useful lemma saying that, for $2 \leq k < n$, we can partition the complete graph $K_n$ into $O(\max((n/k)^2,n))$ cliques of order at most $k$. We begin with some elementary observations. For graphs $G$ and $H$, let $G \cup H$ denote the disjoint union of $G$ and $H$. 

\begin{lemma}\label{simpleprop}
For $s,t,n$ with $s+t \leq n$, we have 
$\cp(K_n \setminus (K_s \cup K_t)) \geq st$.
\end{lemma}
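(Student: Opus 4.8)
The plan is to exhibit a large set of edges in $K_n \setminus (K_s \cup K_t)$ with the property that no two of them can lie in a common clique of the graph, so that any clique partition needs at least that many cliques. Label the vertex set as $S \cup T \cup R$, where $S$ has $s$ vertices, $T$ has $t$ vertices, and $R = V \setminus (S \cup T)$ has $n - s - t \geq 0$ vertices; the edges removed are exactly those inside $S$ and those inside $T$. Consider the $st$ "crossing" edges between $S$ and $T$; all of these are present in $K_n \setminus (K_s \cup K_t)$. I claim these $st$ edges are pairwise "clique-independent": no clique of $K_n \setminus (K_s \cup K_t)$ contains two of them.

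The key observation is that a clique in $K_n \setminus (K_s \cup K_t)$ can contain at most one vertex of $S$ and at most one vertex of $T$ — otherwise it would use an edge inside $S$ or inside $T$, which has been deleted. Hence any clique of the graph meets $S \cup T$ in at most two vertices, and therefore contains at most one edge joining $S$ to $T$. So the $st$ edges between $S$ and $T$ must all lie in distinct cliques of any clique partition, giving $\cp(K_n \setminus (K_s \cup K_t)) \geq st$, as desired. (The hypothesis $s + t \leq n$ is exactly what guarantees $R$ may be empty but $S$ and $T$ are disjoint, so all $st$ crossing edges genuinely exist.)

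There is essentially no obstacle here: the whole argument is the single structural remark that a clique avoiding all edges inside $S$ and all edges inside $T$ can pick up at most one vertex from each of $S$ and $T$. The only thing to be slightly careful about is that we are counting cliques in a partition, not an arbitrary cover, so distinctness of the cliques containing the $st$ edges is automatic once we know each such clique carries at most one of these edges. I would write this up in two or three sentences.
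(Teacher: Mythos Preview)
Your argument is correct and is exactly the paper's proof: the paper simply observes that each of the $st$ edges between the independent set of size $s$ and the independent set of size $t$ must lie in a different clique of any clique partition, which is precisely your ``clique-independence'' observation. Your write-up is just a slightly more detailed version of the same one-line idea.
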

\begin{proof}
Indeed, in a clique-partition of $K_n \setminus (K_s \cup K_t)$, each of the $st$ edges between the independent set of size $s$ and the independent set of size $t$ must be in different cliques.
\end{proof}

\begin{lemma}\label{obvious}
If $G$ has $n$ vertices and all but $t \geq 1$ vertices of $G$ have degree $n-1$, then
$\cp(G) < tn$.
\end{lemma}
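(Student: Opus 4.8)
The plan is to exhibit an explicit clique partition of $G$ consisting of one large clique on the vertices of full degree together with a single-edge clique for every remaining edge, and then to count.

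First I would dispose of the trivial range $t \ge n$, where $\cp(G) \le |E(G)| \le \binom{n}{2} < n^2 \le tn$; so from now on assume $t < n$ (in particular $n \ge 2$). Let $U \subseteq V(G)$ be any set of at most $t$ vertices containing every vertex of degree less than $n-1$. Then every vertex outside $U$ has degree exactly $n-1$, hence is adjacent to all other vertices of $G$. Two consequences follow immediately: $G[V(G) \setminus U]$ is a complete graph, and every non-edge of $G$ has both of its endpoints inside $U$.

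Next I would write down the clique partition: take $G[V(G)\setminus U]$ as one clique, and take each edge of $G$ meeting $U$ as its own single-edge clique. This is visibly a clique partition of $G$. The number of edges meeting $U$ is at most the number of $2$-element subsets of $V(G)$ meeting $U$, namely $\binom{n}{2} - \binom{n-|U|}{2}$; since $|U| \le t < n$ this is at most $\binom{n}{2} - \binom{n-t}{2} = tn - \binom{t+1}{2}$. Hence $\cp(G) \le 1 + tn - \binom{t+1}{2}$.

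Finally I would conclude by cases. If $t \ge 2$ then $\binom{t+1}{2} \ge 3$, so $\cp(G) \le tn - 2 < tn$. If $t = 1$, then either $G = K_n$, in which case $\cp(G) = 1 < n = tn$, or $G$ has a vertex of degree less than $n-1$; since $|U| \le 1$, this vertex is the unique element $v$ of $U$ and it has degree at most $n-2$, so the number of edges meeting $U$ is at most $n-2$ and $\cp(G) \le 1 + (n-2) < n = tn$. There is no real obstacle in this argument; the only point needing a moment's care is the \emph{strict} inequality, since the cruder bound $\cp(G) \le 1 + \sum_{v \in U} d(v)$ only gives $\cp(G) \le tn$ when $t = 1$, and strictness there rests on the observation that a genuinely deficient vertex is missing at least one edge (and otherwise $G$ is complete, with $\cp(G) = 1$).
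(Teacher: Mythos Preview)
Your proof is correct and follows essentially the same construction as the paper: one large clique on the full-degree vertices together with single edges for everything touching the remaining $t$ vertices. The paper's count is slightly sharper---it uses that each of the $t$ deficient vertices has degree at most $n-2$, giving $1 + t(n-2) < tn$ directly---which is why you needed the extra case analysis at $t=1$, but the argument is the same.
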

\begin{proof} 
The clique partition of $G$ consisting of the clique on the $n-t$ vertices of degree $n-1$ and a $K_2$ for each remaining edge uses at most 
$1+t(n-2) < tn$ cliques. 
\end{proof} 

In order to prove our main auxiliary lemma, we need to know a little about the particular class of Steiner systems known as projective planes. A {\it projective plane} consists of a set of points and a set of lines and a relation between points and lines called incidence having the following properties:
\begin{itemize}
\item For any two distinct points, there is exactly one line incident to both of them, 
\item For any two distinct lines, there is exactly one point incident to both of them,
\item There are four points such that no line is incident with more than two of them. 
\end{itemize}
The first condition says that any two points determine a line, the second condition says that any two lines intersect in one point and the 
last condition excludes some degenerate cases. It can be shown that a projective plane has the same number of lines as it has points. Any finite projective plane has $q^2+q+1$ points for some positive integer $q$ and we denote such a projective plane by $P_q$. Each line is incident with $q+1$ points and each point is incident with $q+1$ lines. Therefore, the lines of $P_q$ form a Steiner $(q^2+q+1,q+1)$-system. It is known that if $q$ is a prime power then there is a projective plane on $q^2+q+1$ points. It is a famous open problem to determine if there exist finite projective planes of other orders. 

\begin{lemma}\label{useful}
Let $k \geq 2$ and $n$ be positive integers and let $f(n,k)$ denote the minimum number of cliques each on at most $k$ vertices needed to clique partition $K_n$. If $n \leq k$, trivially $f(n,k)=1$. If $n > k$, then
$$f(n,k)=\Theta\left(\max\left((n/k)^2,n\right)\right).$$
\end{lemma}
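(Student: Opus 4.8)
The plan is to prove matching upper and lower bounds. The lower bound is short, while the upper bound is a recursive construction built from finite affine planes and mutually orthogonal Latin squares.

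\emph{Lower bound.} Fix $n>k$ and any partition of $E(K_n)$ into cliques of size at most $k$. Since a clique on at most $k$ vertices has at most $\binom k2$ edges, at least $\binom n2/\binom k2=\Omega((n/k)^2)$ cliques are used. For the complementary estimate, discard the cliques on fewer than two vertices; the remaining ones pairwise intersect in at most one vertex (each edge lies in a unique clique of the partition), and every pair of vertices lies in exactly one of them, so they form a pairwise balanced design on $[n]$ in which no block is all of $[n]$ (each block has at most $k<n$ vertices). The de Bruijn--Erd\H{o}s theorem then forces at least $n$ blocks. Combining, $f(n,k)\ge\max\bigl(\Omega((n/k)^2),\,n\bigr)=\Omega(\max((n/k)^2,n))$.

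\emph{Upper bound.} Assume $n>k$; the case $n\le k$ is trivial. Two constructions are used. (i) For a prime power $p$, the affine plane $AG(2,p)$ partitions $K_{p^2}$ into $p^2+p$ cliques of size $p$, so restricting to a subset gives $f(n',p)\le p^2+p$ for every $n'\le p^2$. (ii) For a prime power $N$ and any $2\le m\le N+1$, the $m-2$ mutually orthogonal Latin squares of order $N$ yield a transversal design $TD(m,N)$ which, after deleting dummy points, partitions the edge set of the complete $m$-partite graph with parts of size at most $N$ into at most $N^2$ cliques of size at most $m$. Throughout, Bertrand's postulate supplies a prime power within a constant factor of any prescribed value. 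Now let $q$ be the largest prime power with $q\le k$, so $q\ge k/2$, and set $g(n)=f(n,q)\ge f(n,k)$; it suffices to bound $g$. If $n\le q$ then $g(n)=1$. If $q<n\le q^2$ then $g(n)=O(n)$: for $n\le q^2/4$ restrict $AG(2,r)$ to $n$ points with $r$ a prime in $(\sqrt n,2\sqrt n]\subseteq(\sqrt n,q]$, using $r^2+r=O(n)$ cliques of size $r\le q\le k$; for $q^2/4<n\le q^2$ we have $n=\Theta(q^2)$, so (i) with $p=q$ uses $q^2+q=O(n)$ cliques.

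Finally, for $n>q^2$, split $[n]$ into $q$ parts of size at most $\lceil n/q\rceil$, recurse inside each part, and cover the cross-part edges by a $TD(q,N)$ with $N$ the least prime power $\ge\lceil n/q\rceil$; since $q<\lceil n/q\rceil\le N\le 2\lceil n/q\rceil$ the design exists and has at most $N^2=O((n/q)^2)$ blocks, each a clique of size $q\le k$. Hence
\[ g(n)\ \le\ q\cdot g\bigl(\lceil n/q\rceil\bigr)+O\bigl((n/q)^2\bigr)\qquad(n>q^2). \]
Iterating this until the argument drops to at most $q^2$, the error terms form a geometric series summing to $O((n/q)^2)$, and the residual term $q^{\ell}g(m)$ (with $m\le q^2$ and $q^{\ell}m=O(n)$) is $O(n)$ because $g(m)=O(m)$ on that range. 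Since $n>q^2$ forces $(n/q)^2\ge n$, we conclude $g(n)=O((n/q)^2)$, and in every regime $f(n,k)\le g(n)=O(\max((n/k)^2,n))$, matching the lower bound.

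\emph{Main obstacle.} The one delicate point is keeping the construction tight. If a single transversal design of some fixed order (say $q$) were used at every level of the recursion, or if an affine plane of order $\Theta(k)$ were used to treat a value of $n$ far below $k^2$, the clique count would overshoot the target by a factor polynomial in $k$. The fix is to match the order of each transversal design to the current part size and to handle the range $k<n\le q^2$ separately, so that the costs telescope geometrically; the remaining bookkeeping (floors, ceilings, dummy points, and small values of $k$) is routine.
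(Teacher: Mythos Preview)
Your proof is correct, and the overall architecture---a counting/design lower bound paired with a recursive plane-based construction---matches the paper. The differences are worth noting, though. For the $\Omega(n)$ lower bound you invoke the de~Bruijn--Erd\H{o}s theorem on linear spaces, which dispatches it in one line; the paper instead argues directly by looking at the two largest cliques in an optimal partition and using Lemma~\ref{simpleprop} (that $\cp(K_n\setminus(K_s\cup K_t))\ge st$) in a short case analysis. Your route is cleaner and more citable, theirs is self-contained. For the upper bound both arguments recurse by splitting into roughly $k$ parts and covering the cross-edges with a design: the paper embeds the parts as $k$ lines of a projective plane $P_q$ and uses the remaining $q^2+q+1-k$ lines, while you use a transversal design $TD(q,N)$ built from MOLS---these are essentially the same gadget viewed from different angles. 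Your base case (affine planes of adaptive order $r\approx\sqrt{n}$ when $k<n\le k^2$) differs from the paper's (a single projective plane of order $\approx\sqrt{n}$ when $k\ge 2\sqrt{n}$), but both yield $O(n)$ cliques in that regime. The net effect is the same bound with comparable bookkeeping; your version has the slight advantage of a more modular lower bound, at the cost of importing de~Bruijn--Erd\H{o}s as a black box.
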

\begin{proof}
We first prove the lower bound. The pigeonhole principle implies the lower bound $f(n,k) \geq {n \choose 2}/{k \choose 2} \geq (n/k)^2$ as we need to cover ${n \choose 2}$ edges by cliques each with at most ${k \choose 2}$ edges. Now suppose $\sqrt{n} \leq k < n$ and we have a partition of the edges of $K_n$ into cliques each with at most $k$ vertices. Let $s \geq t$ be the sizes of the two largest cliques used in the partition, so $t \geq 2$. If $s \leq 2 \sqrt{n}$, then the number of cliques used is at least $f(n,s) \geq n^2/s^2 \geq n/4$. So suppose $s > 2\sqrt{n}$. By Lemma \ref{simpleprop}, there are at least $(s-1)(t-1)$ remaining cliques in the partition (the cliques may intersect in one vertex). So suppose $(s-1)(t-1) < n/4$. Then $st < n$. Since all cliques besides the largest have at most $t$ vertices the number of cliques used is at least $$1+\frac{{n \choose 2}-{s \choose 2}}{{t \choose 2}} = 1+\frac{(n-s)(n+s-1)}{t(t-1)} > 1+\frac{(n-s)(n+s-1)}{(n/s)((n/s)-1)} = 1+\frac{s^2(n+s-1)}{n} \geq 1+s^2 > n$$ as $\sqrt{n} \leq s$ and $t <n/s$. We have thus proved that $f(n,k) \geq \max((n/k)^2,n/4)$.

We now turn to proving the upper bound. We will prove by induction on $n$ that the bound $f(n,k) \leq \max(200(n/k)^2,4n)$ holds. We may assume that $k > 20$ as otherwise we can clique partition $K_n$ into ${n \choose 2} \leq 200(n/k)^2$ edges. We will use Bertrand's postulate that there is a prime between $x$ and $2x$ for every $x \geq 1$. Much better estimates are known on the distribution of the primes, but this is sufficient for our purposes.

If $k \geq 2\sqrt{n}$, let $q$ be the smallest prime such that  $q^2+q+1 \geq n$. Bertrand's postulate implies that $q^2+q+1 < 4n$, and hence $q < k$. Consider a projective plane $P_q$ on $q^2+q+1$ points and edge-partition the complete graph on these points into cliques of size $q+1 \leq k$ given by the lines of $P_q$. There are $q^2+q+1 < 4n$ such cliques and, restricting to $n$ of these points, we get in this case $f(n,k) \leq q^2+q+1 < 4n$.

If $k < 2\sqrt{n}$, let $q$ be the smallest prime which is at least $n/k+k$. Bertrand's postulate implies that $q \leq 2(n/k+k) < 2(n/k+2\sqrt{n})<10n/k$. The Tur\'an graph $T_{n,k}$ is the complete $k$-partite graph on $n$ vertices with parts of size as equal as possible. We will find a partition of the edges of $T_{n,k}$ into cliques of size at most $k$ using at most $q^2+q-k+1$ cliques. Let $S_1,\ldots,S_k$ denote the $k$ independent sets of $T_{n,k}$. Let $L_1,L_2,\ldots,L_k$ denote $k$ lines of the projective plane $P_q$. View each $S_i$ as a subset of points of the line $L_i$ such that the points of $S_1,\ldots,S_k$ do not include any of the intersection points of $L_1,\ldots,L_k$. We can do this since $|S_i|\leq \lceil n/k \rceil$, $|L_i|=q+1\geq n/k+k+1$, each pair of lines have one intersection point, and so each $L_i$ has precisely $k-1$ intersection points in total with all the other $L_j$. Every pair of points in $S_1 \cup \ldots \cup S_k$ other than those inside one of the $S_i$ is contained in exactly one of the $q^2+q-k+1$ lines of $P_q$ other than $L_1,\ldots,L_k$. Since each of these lines intersects $L_i$ and hence $S_i$ in at most one point, these $q^2+q-k+1$ lines give an edge-partition of $T_{n,k}$ into $q^2+q-k+1$ cliques each with at most $k$ vertices. This gives the bound
\begin{eqnarray*} 
f(n,k) & \leq & q^2+q-k+1+kf(\lceil n/k \rceil,k) \leq 110(n/k)^2+k\max(200(\lceil n/k \rceil)^2/k^2,4 \lceil n/k \rceil) \\ & \leq &  110(n/k)^2+800n^2/k^3+8n \leq 200(n/k)^2,
\end{eqnarray*}
where we use $20 < k < 2\sqrt{n}$ and that, for each $k$, $f(n,k)$ is a monotonically increasing function of $n$. This latter fact follows by noting that restricting a clique partition of a clique to a subclique is a clique partition of the subclique. This completes the proof by induction on $n$. 
\end{proof}

One can easily modify the above proof, using the prime number theorem instead of Bertrand's postulate, to show that 
$f(n,k)=(1+o(1))\frac{n^2}{k(k-1)}$ as long as $k=o(\sqrt{n})$. 
\vspace{3mm}

{\bf Proof of Theorem \ref{maintheorem}:} 
Let $F$ be a graph with $n$ vertices and $m \geq \sqrt{n}$ edges and $k=(n^2/m)^{1/3}\leq \sqrt{n}$. By Lemma \ref{useful}, we can partition the complete graph on $n$ vertices into 
$N=O\left((n/k)^2\right)$ cliques $Q_1,\ldots,Q_N$ each of order at most $k$. 
For each clique $Q_i$ let $t_i$ be the number of edges of $F$ it contains. 
Each $Q_i$ that contains no edge of $F$ will be used in the clique partition of $\overline{F}$.
If $t_i \geq 1$, we use Lemma \ref{obvious} to partition the induced subgraph of $\overline{F}$ on the vertex set of 
$Q_i$ into $2t_i|Q_i| \leq 2t_i k$ cliques. Thus, we get that $\overline{F}$ can 
be partitioned into at most $N+2\sum_i t_ik=N+2mk=O\left((mn)^{2/3}\right)$ cliques, which completes the proof of Theorem \ref{maintheorem}. 
\qed \vspace{3mm}

Note that adding an edge to a graph can increase the clique partition number by at most $1$. It follows that for any forest $F$, if $T$ is a spanning tree containing $F$ then $\cp(\overline{F}) \leq \cp(\overline{T})+n-1$. So, to prove Theorem \ref{for}, it is sufficient to prove it for trees. Let $g(n)$ denote the maximum of $\cp(\overline{T})$ over all trees $T$ on $n \geq 2$ vertices. We will prove by induction that 
$$g(n) \leq 100n(1+\log \log n),$$ 
which verifies Theorem \ref{for}. This inequality clearly holds for $n \leq 200$ as $g(n) \leq {n \choose 2} \leq 100n$ in this case. So suppose $n >200$.

A {\it tree partition} of a graph $G$ is a collection $\{T_1,\ldots,T_r\}$ of subtrees of $G$ such that each edge of $G$ is in exactly one tree and, 
for all $i \not = j$, $T_i$ and $T_j$ share at most one vertex in common. We will use the following simple lemma from \cite{CaVe}. 

\begin{lemma} Let $T$ be a tree on $n$ vertices and $2 \leq k \leq n$. Then there exists a tree partition 
$\{T_1,\ldots,T_r\}$ of $T$ into at most $2n/k$ trees such that the number of vertices of each $T_i$ is between $k/3$ and $k$. \hfill $\Box$
\end{lemma}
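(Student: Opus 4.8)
The plan is to build the tree partition greedily, by rooting $T$ and carving off pieces from the leaves upward, each piece being a ``spider'': a single vertex together with several subtrees hanging from it.

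Root $T$ at an arbitrary vertex $r_0$ and process the vertices in postorder (children before their parents). We maintain the invariant that after processing a vertex $v$ there is a pending subtree $R(v)$ rooted at $v$, of size below a threshold $\tau\approx k/2$, consisting of $v$ together with those of its descendants not yet placed in a finished piece, and that the finished pieces so far all have size in $[k/3,k]$ and pairwise meet in at most one vertex. To process $v$, form the candidate pending tree $C=\{v\}\cup\bigcup_{c}R(c)$, the union over the children $c$ of $v$; it is a subtree of size $1+\sum_c|R(c)|$. If $|C|<\tau$ we simply set $R(v)=C$. Otherwise we carve $C$: list the arms $R(c)$ and add them one at a time into a bundle that starts equal to $\{v\}$; as soon as a bundle reaches size at least $\tau$ we declare it a finished piece and start a fresh bundle (again equal to $\{v\}$) with the remaining arms; when the arms are exhausted, the last partial bundle, of size below $\tau$, becomes $R(v)$. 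Since each arm has size below $\tau\approx k/2$, every finished bundle has size from $\tau$ up to roughly $2\tau$, which lies in $[k/3,k]$ for a suitable choice of $\tau$, and distinct bundles share only the vertex $v$. This deals with high-degree vertices automatically, and each edge of $T$ incident to $v$ from below goes into exactly one bundle (the edge from $v$ to its parent staying inside $R(v)$ until $v$'s parent is processed).

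At the end we are left with the pending subtree $R(r_0)$ at the root, which is too small (size $<\tau$) to be a piece on its own. To absorb it, observe that the highest vertex at which a carve occurred must itself lie in $R(r_0)$ — no carve happened at any of its ancestors, so each of them merely bubbled a residue upward — and hence some finished piece $g$ shares a vertex with $R(r_0)$. Merge $R(r_0)$ into $g$; the result is again a subtree, of size at most roughly $3k/2$, and if it exceeds $k$ we split it into two subtrees of sizes in $[k/3,k]$ using a centroid of the merged tree. This produces a valid tree partition with every piece of size in $[k/3,k]$. For the count, the tree-partition identity $\sum_i(|T_i|-1)=|E(T)|=n-1$ together with the fact that all but a bounded number of pieces have roughly $k/2$ vertices gives $r=O(n/k)$, and $r\le 2n/k$ once $\tau$ and the final split are chosen with a little care.

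The main obstacle is the bookkeeping around shared vertices: one has to verify throughout that each edge of $T$ lands in exactly one piece and that any two pieces meet in at most one vertex. The two delicate points are the carving step at a high-degree vertex, where a single vertex becomes the shared centre of many spiders at once, and the final merge-and-resplit step, where one must be sure an adjacent finished piece exists and that the split keeps every resulting piece within the size window $[k/3,k]$.
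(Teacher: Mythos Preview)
The paper does not actually prove this lemma; it is quoted from Cavers and Verstra\"ete with a closing $\Box$ to signal that it is cited without argument. So there is no ``paper's own proof'' to compare against.

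Evaluated on its own, your sketch is essentially correct. The bottom-up carving with threshold $\tau \approx k/2$ does produce edge-disjoint subtrees (spiders) of size in $[\tau, 2\tau-2] \subseteq [k/3,k]$, and since any two edge-disjoint subtrees of a tree can share at most one vertex, the ``tree partition'' axioms follow automatically from the edge bookkeeping you describe. Your claim that the highest carving vertex lies in $R(r_0)$ is also right: if it did not, then walking from the root towards it one would meet a vertex of $R(r_0)$ whose child's arm was carved off into a bundle, and that vertex would be a strictly higher carving vertex. For the centroid step you should say explicitly that the components of $M\setminus\{c\}$, each of size at most $|M|/2$, can be greedily grouped into two families each of total size at least $(|M|-1)/3$; this is short but not automatic, and it is what guarantees both halves land in $[k/3,k]$.

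The one place you are too casual is the exact constant $2$. With $\tau=\lceil k/2\rceil$ your bundles have at least $\tau-1\ge (k-2)/2$ edges each, and the final merge-and-split can add an extra piece, so $\sum_i(|T_i|-1)=n-1$ only yields $r\le 2(n-1)/(k-2)+1$, which is $O(n/k)$ but exceeds $2n/k$ in general. Pushing to $\tau\ge k/2+1$ to get $\ge k/2$ edges per bundle makes $2\tau-2>k$ for odd $k$, so ``a little care'' is more delicate than you suggest. For the paper's application (where $k=\sqrt{n}$ and only $r=O(\sqrt{n})$ is actually needed for the $O(n\log\log n)$ conclusion) this constant does not matter.
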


{\bf Proof of Theorem \ref{for}:} Let $k=\sqrt{n}$ and apply the above lemma to find a tree partition of $T$ into $r \leq 2n/k = 2\sqrt{n}$ trees each with at 
most $k=\sqrt{n}$ vertices. Order these trees $\{T_1,\ldots,T_r\}$ so that for $i \geq 2$ the union of $\bigcup_{j=1}^{i-1} V(T_j)$ is connected. Then
there is exactly one vertex $v_i$ of $T_i$ that is contained in $\bigcup_{j=1}^{i-1} V(T_j)$. By 
Bertrand's postulate, there is a prime $q$ satisfying $3\sqrt{n} \leq q \leq 6\sqrt{n}$. We will show that there is a one-to-one mapping of the vertices of $T$ into the points of the 
projective plane $P_q$ on $q^2+q+1$ points such that each $T_i$ is contained in a line $L_i$ and no vertices of $T$ other than those in $T_i$ map to a point in $L_i$.

Let $L_1$ be an arbitrary line in the projective plane $P_q$ on $q^2+q+1$ vertices. Arbitrarily embed the vertices of $T_1$ into the points of $L_1$ with the vertex $v_2$ identified 
with some point $w_2$. This can be done since $|T_1| \leq k < q+1 = |L_1|$. Suppose we have already embedded $L_1,\ldots,L_{i-1}$ and let $w_i$ denote the image of the vertex $v_i$ 
which is in at least one line $L_j$ with $j < i$. Pick an arbitrary line $L_i$ through $w_i$ with $L_i \not = L_j$ for $j < i$. Since there are $q+1>r>i-1$ lines through each point, 
and, in particular, through $w_i$, we can indeed pick such a line $L_i$. Arbitrarily embed the remaining vertices $V(T_i) \setminus \{v_i\}$ of $T_i$ amongst the points of $L_i$ not in 
any $L_j$ with $j<i$. Since any two lines intersect in exactly one point $L_i$ has $q+1-(i-1)\geq q+2-r>k \geq |T_i|$ points not in any $L_j$ with $j<i$. Thus we can indeed embed 
these remaining vertices. This demonstrates that we can find the desired mapping of the vertices of $T$ into the points of $P_q$.

We next construct a clique partition of $\overline{T}$. For each line of the projective plane not containing an edge of $T$, we use the corresponding clique restricted to the vertices 
of $T$. For each line of the projective plane containing an edge of $T$, this line contains the vertices of $T_i$ and no other vertex of $T$, so we use at most $g(|T_i|)$ cliques to 
partition the edges of $\overline{T_i}$. Note that $\sum_{i=1}^r |T_i|=|T|+r-1 \leq n+2n/k-1<n+2\sqrt{n}$. Using this estimate, together with the 
inequalities $|T_i| \leq \sqrt{n}$, $n > 200$ and the induction hypothesis, we get 
\begin{eqnarray*} g(n) & \leq & q^2+q+1+\sum_{i=1}^r g(|T_i|) \leq 45n+100(n+2\sqrt{n})(1+\log \log \sqrt{n}) \\  & = & 45n+100(n+2\sqrt{n})\log 
\log n \leq 100n(1+\log \log n),\end{eqnarray*} as required.\qed

\section{Hilbert cubes in dense sets} \label{sec:Hilbert}

A {\it Hilbert cube} or an affine cube is a set $H \subset \mathbb{N}$ of the form
\[H = H(x_0, x_1, \dots, x_d) = \left\{x_0 + \sum_{i \in I} x_i: I \subseteq [d]\right\},\]
where $x_0$ is a non-negative integer and $x_1, \dots, x_d$ are positive integers. We will always assume here that the generators $x_1,\ldots, x_d$ are all distinct. We refer to the index $d$ as the dimension.

One of the earliest results in Ramsey theory is a theorem of Hilbert \cite{H1892} stating that if $n$ is sufficiently large then any coloring of the set $[n]$ with a fixed number of colors, say $r$, must contain a monochromatic Hilbert cube of dimension $d$. The smallest such $n$ we denote by $h(d, r)$. The best known upper bound for this function is 
\[h(d,r) \leq (2r)^{2^{d-1}}.\]
As noted in \cite{BCEG85}, a double exponential upper bound already follows from Hilbert's original argument. The result we have quoted is a slight strengthening noted by Gunderson, R\"odl and Sidorenko \cite{GRS99} which follows from a stronger density statement. 

This density version \cite{Sz69} states that for any natural number $d$ and $\delta > 0$ there exists an $n_0$ such that if $n \geq n_0$ any subset of $[n]$ containing $\delta n$ elements contains a Hilbert cube of dimension $d$. Quantitative versions of this lemma imply that any subset of $[n]$ of fixed positive density $\delta$ contains a Hilbert cube of dimension at least $c' \log \log n$, where $c'$ depends only on $\delta$. This in turn gives a double exponential upper bound for the original coloring problem. 

On the other hand, Hegyv\'ari \cite{H99} gave a lower bound by proving that with high probability a random subset of $[n]$ of small but fixed positive density $\delta$ does not contain Hilbert cubes of dimension $c \sqrt{\log n \log \log n}$, where $c$ depends only on $\delta$. Here we improve this result as follows.

\begin{theorem} \label{thm:Hegy}
For any $0 < \delta < 1$, there exists $c > 0$ such that with high probability a random subset of the set $[n]$, where each element is chosen independently with probability $\delta$, does not contain Hilbert cubes of dimension $c \sqrt{\log n}$.   
\end{theorem}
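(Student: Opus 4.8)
The plan is a first-moment argument. Write $R$ for the random set and fix $d=c\sqrt{\log n}$ for a small constant $c=c(\delta)$ to be chosen. It suffices to show that the expected number of tuples $(x_0;x_1<\dots<x_d)$ with $x_0+\sum_i x_i\le n$ for which $H(x_0;x_1,\dots,x_d)\subseteq R$ tends to $0$, since Markov's inequality then gives the conclusion. This expectation equals
\[\sum_{(x_0;x_1<\dots<x_d)}\delta^{\,|H(x_0;x_1,\dots,x_d)|},\]
and, since there are at most $n^{d+1}$ such tuples, the part of this sum coming from cubes $H$ with $|H|\ge T$ is at most $n^{d+1}\delta^{T}$, which tends to $0$ once $T$ exceeds a suitable constant multiple of $(d+1)\log n/\log(1/\delta)$; so I would fix $T$ to be a suitable multiple of $(\log n)^{3/2}$. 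Everything then reduces to controlling the cubes with $|H|<T$, and the point is that such a cube is highly structured.

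The engine here is the classical extremal fact that $d$ distinct positive integers have at least $\binom{d+1}{2}+1$ distinct subset sums, with equality essentially only for dilates of $\{1,2,\dots,d\}$. Thus a cube with $|H|$ near this minimum is close to an interval: its generators form a dilate of $\{1,\dots,d\}$ together with only boundedly many ``extra'' generators. More generally I would establish a counting lemma asserting that the number of Hilbert cubes $H\subseteq[n]$ of dimension $d$ with $|H|\le m$ is at most $n^{O(\log(2+m/d^{2}))}$, so in particular $n^{O(1)}$ when $m=O(d^{2})$. Granting this, I group the cubes with $|H|<T$ according to the value $m=|H|$: each term contributes at most $n^{O(\log(2+m/d^{2}))}\delta^{m}$, and since $|H|\ge\binom{d+1}{2}=\Theta(d^{2})=\Theta(c^{2}\log n)$ for every cube, writing $m=ud^{2}$ with $u\ge\tfrac12$ turns each term into $2^{(\log n)\left(O(\log(2+u))-u\,c^{2}\log(1/\delta)\right)}$ up to lower-order factors. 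Choosing $c$ large enough as a function of $\delta$, the exponent bracket is bounded away from $0$ uniformly in $u\ge\tfrac12$, so each term is $n^{-\Omega(1)}$; summing over the at most $\mathrm{poly}(\log n)$ relevant values of $m$ still gives $o(1)$. Combined with the bound for $|H|\ge T$, this proves the theorem.

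I expect the counting lemma to be the crux. It amounts to a stability statement for the extremal problem ``minimize the number of subset sums of $d$ distinct positive integers'', together with a clean way to turn near-extremality into an upper bound of the form $n^{O(\log(2+m/d^{2}))}$ — an estimate that must still beat the trivial count $n^{d+1}$ after being multiplied by $\delta^{m}$ and summed. A secondary point to watch is the parameter balancing: the exponent $\sqrt{\log n}$ arises precisely by matching the $\Theta(d^{2})$ lower bound on $|H|$ against a \emph{polynomial}-in-$n$ count of near-extremal cubes, and one must check that taking $c$ sufficiently large (in terms of $\delta$) is consistent with all of the inequalities used. This is also where $\delta<1$ enters: for $\delta$ close to $1$ a random $\delta$-dense set whp contains runs of $\asymp\log n/\log(1/\delta)$ consecutive integers, hence dilated-interval cubes of dimension $\asymp\sqrt{\log n/\log(1/\delta)}$, so $c$ must be taken above this threshold, which blows up as $\delta\to1$.
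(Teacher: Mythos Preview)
Your overall plan matches the paper's exactly: a first-moment bound, a split according to $|H|$, and a counting lemma of the shape ``the number of $d$-sets $X\subset[n]$ with $|\Sigma(X)|\le m$ is at most $n^{O(\log(2+m/d^2))}$'' (the paper states it as $n^{O(s)}d^{O(d)}$ for $|\Sigma(X)|\le 2^s d^2$, which is equivalent here since $d^{O(d)}=n^{o(1)}$ when $d=\Theta(\sqrt{\log n})$). Your dyadic summation of $\delta^m$ against this count is also exactly how the paper finishes.

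The gap is precisely where you place it, and the missing ingredient is concrete: the paper proves the counting lemma via the inverse Littlewood--Offord theorem of Tao and Vu. That theorem says that if $|\Sigma(X)|\le d^3$ then all but at most $d^{1/2}$ elements of $X$ lie in a generalized arithmetic progression $Q$ of bounded rank and volume $d^{O(1)}$; this gives only $n^{O(1)}d^{O(d)}$ choices for the structured part $X'\subset Q$. The remaining $\le d^{1/2}$ elements are then added one at a time: at each step either $|\Sigma(X_{i+1})|=2|\Sigma(X_i)|$ (at most $s+O(1)$ such ``free'' steps, each costing a factor $n$) or else the new element lies in the difference set $\Sigma(X_i)-\Sigma(X_i)$, of size $\le d^{O(1)}$. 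This yields $n^{O(s)}d^{O(d)}$. Your stability heuristic---near-extremal sets are dilates of $\{1,\dots,d\}$ plus boundedly many extras---is the right picture at $m=\Theta(d^2)$, but it does not by itself give the uniform $n^{O(\log(2+m/d^2))}$ bound across the whole range $d^2\lesssim m\le d^3$; the inverse Littlewood--Offord theorem (or something of equal strength) together with the doubling/difference-set reconstruction is what turns that intuition into a proof.
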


By another result of Hegyv\'ari \cite{H99}, this theorem is sharp up to the constant $c$. That is, with high probability, dense random subsets of $[n]$ contain Hilbert cubes of dimension $c' \sqrt{\log n}$ for some $c'$ depending on the density.

Let $X$ be a set with elements $1 \leq x_1 < x_2 < \dots < x_d$ and write
\[\Sigma(X) = \left\{\sum_{i \in I} x_i: I \subseteq [d]\right\}.\]
Note that a Hilbert cube is just a translation of an appropriate $\Sigma(X)$. The following basic estimates for $|\Sigma(X)|$ will be useful to us. For a proof, see \cite{H96}. 

\begin{lemma} \label{lem:Sigmabasic}
For any set $X$ with $d$ elements,
\[{d+1 \choose 2}+1 \leq |\Sigma(X)| \leq 2^d. \tag*{$\Box$}\]
\end{lemma}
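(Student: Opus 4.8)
The plan is to prove the two bounds separately, both by elementary counting arguments. The upper bound $|\Sigma(X)| \leq 2^d$ is immediate: the set $\Sigma(X)$ is by definition the image of the map $I \mapsto \sum_{i \in I} x_i$ from the power set $2^{[d]}$ into $\mathbb{N}$, so its cardinality is at most the number of subsets of $[d]$, which is $2^d$.

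For the lower bound, the idea is to exhibit $\binom{d+1}{2}+1$ distinct subset sums. First I would order the generators $1 \leq x_1 < x_2 < \cdots < x_d$, as in the statement. The key observation is that the partial sums $\sigma_j := x_1 + x_2 + \cdots + x_j$ for $j = 0, 1, \dots, d$ are $d+1$ strictly increasing values (with $\sigma_0 = 0$), and that between consecutive partial sums we can interpolate: for each $j \geq 1$, the sums $\sigma_{j-1} + x_k$ for $k = j, j+1, \dots, d$ form an increasing chain that starts above $\sigma_{j-1}$ (since $x_k \geq x_j > 0$) and — because $x_j < x_{j+1} < \cdots$ — fills in new values; more simply, one shows the chain $\sigma_{j-1} + x_j < \sigma_{j-1}+x_{j+1} < \cdots < \sigma_{j-1}+x_d$ sits strictly between $\sigma_{j-1}$ and $\sigma_j + (\text{something})$. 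The cleanest route is to consider, for each $j = 1, \dots, d$, the sums of the form $\sigma_{j-1} + x_k$ with $j \leq k \leq d$; as $j$ ranges this produces $\sum_{j=1}^d (d-j+1) = \binom{d+1}{2}$ sums, and together with the empty sum $0$ one must check they are all distinct and hence $|\Sigma(X)| \geq \binom{d+1}{2}+1$.

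The main obstacle — and the only nontrivial point — is verifying that the $\binom{d+1}{2}$ sums just described are pairwise distinct. I would argue this by a "greedy/lex-first" argument: given a value $v$ of the form $\sigma_{j-1}+x_k$, the index $j-1$ is recovered as the largest $\ell$ with $\sigma_\ell < v$ minus a correction, or more robustly, one sorts all subset sums and shows that the specific family $\{\sigma_{j-1} + x_k : 1 \leq j \leq k \leq d\} \cup \{0\}$ injects into $\Sigma(X)$ by showing no two members coincide: if $\sigma_{j-1}+x_k = \sigma_{j'-1}+x_{k'}$ with $j \leq j'$, then subtracting gives $x_k - x_{k'} = \sigma_{j'-1} - \sigma_{j-1} = x_j + \cdots + x_{j'-1} \geq 0$, forcing $k \geq k'$; combined with the growth rates of the $x_i$ one deduces $j=j'$ and $k=k'$. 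Once this injectivity is in hand, both inequalities follow and the proof is complete. Since the paper cites \cite{H96} for this lemma, I would present only this brief sketch and refer the reader there for the routine verification.
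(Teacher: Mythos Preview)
The paper does not supply its own argument here --- the $\Box$ after the statement signals that the lemma is quoted from \cite{H96} without proof --- so the comparison is with a standard proof rather than the paper's.

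Your upper bound is fine, but the lower-bound argument has a genuine gap: the family $\{\sigma_{j-1}+x_k : 1\le j\le k\le d\}$ need \emph{not} consist of distinct values. Take $X=\{1,2,3\}$: then $\sigma_0+x_3=0+3=3$ while $\sigma_1+x_2=1+2=3$, so $(j,k)=(1,3)$ and $(j,k)=(2,2)$ collide. Your sketched injectivity argument gets as far as $k\ge k'$ from $j\le j'$, but the appeal to ``growth rates of the $x_i$'' cannot close the gap because the conclusion is false in general. (Indeed for $X=\{1,2,3\}$ your family, together with $0$, produces only six distinct values, not seven.)

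A small modification rescues the approach. Replace ``prefix plus one extra element'' by ``prefix minus one element'': for $1\le \ell\le d$ and $0\le i<\ell$ consider the sums $\sigma_\ell - x_i$ (with the convention $x_0=0$), together with the empty sum $0$. There are again $\binom{d+1}{2}+1$ of them, and now distinctness is easy: for fixed $\ell$ the values $\sigma_\ell, \sigma_\ell-x_1,\ldots,\sigma_\ell-x_{\ell-1}$ are all strictly greater than $\sigma_{\ell-1}$ (since $x_\ell>x_i$ for $i<\ell$) and at most $\sigma_\ell$, so the blocks for different $\ell$ are disjoint intervals. Equivalently, one can run induction on $d$, adding $x_d$ last and noting that the $d$ sums $x_d+\sigma_{d-1}-x_i$ for $0\le i\le d-1$ all exceed $\sigma_{d-1}=\max\Sigma(X\setminus\{x_d\})$.
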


The main new ingredient that we use is the following inverse Littlewood--Offord theorem due to Tao and Vu \cite{TV09} (see also \cite {NV11} and \cite{TV10} for improved versions). This says that if $|\Sigma(X)|$ is small then $X$ must be highly structured. A {\it generalized arithmetic progression} or GAP for short is a subset $Q$ of $\mathbb{Z}$ of the form 
\[Q = \left\{x_0 + a_1 x_1 + \dots + a_r x_r: 1 \leq a_i \leq A_i\right\}.\]
We refer to $r$ as the rank of the GAP $Q$ and the product $A_1 \dots A_r$ as the volume of $Q$. Rephrased in our terms, the inverse Littlewood--Offord theorem states that if $|\Sigma(X)|$ is small then almost all of $X$ must be contained in a GAP $Q$ of low rank.

\begin{lemma} \label{lem:ILO}
For every $C > 0$ and $0 < \epsilon < 1$, there exist positive constants $r$ and $C'$ such that if $X$ is a multiset with $d$ elements and $|\Sigma(X)| \leq d^C$ then there is a GAP $Q$ of dimension $r$ and volume at most $d^{C'}$ such that all but at most $d^{1 - \epsilon}$ elements of $X$ are contained in $Q$. \hfill $\Box$
\end{lemma}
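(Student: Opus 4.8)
This is the inverse Littlewood--Offord theorem of Tao and Vu \cite{TV09}, and the plan is to follow their Fourier-analytic route, which I sketch here. \emph{Step 1: reduce to a small-ball estimate.} Writing $S=\sum_{i=1}^{d}\varepsilon_i x_i$ for independent Rademacher signs $\varepsilon_i\in\{\pm1\}$, the correspondence $\varepsilon_i=1\iff i\in I$ shows that $\Sigma(X)$ is an affine image of the set of attainable values of $S$; hence $|\Sigma(X)|\le d^{C}$ forces, by pigeonhole, the concentration probability $\rho:=\max_{v}\Pr[S=v]\ge d^{-C}$.

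\emph{Step 2: large $\rho$ yields a dense Bohr-type sublevel set.} The Fourier transform $\widehat{\mu}(\theta)=\mathbb{E}[e^{2\pi i\theta S}]$ equals $\prod_{i}\cos(2\pi x_i\theta)$, and one has the elementary pointwise bound $|\cos 2\pi t|\le\exp(-c\,\mathrm{dist}(t,\tfrac12\mathbb{Z})^{2})$ for a constant $c>0$, so Fourier inversion on $\mathbb{T}=\mathbb{R}/\mathbb{Z}$ gives
\[
\rho\;\le\;\int_{\mathbb{T}}|\widehat{\mu}(\theta)|\,d\theta\;\le\;\int_{\mathbb{T}}\exp\!\bigl(-c\,F(\theta)\bigr)\,d\theta,\qquad F(\theta):=\sum_{i=1}^{d}\mathrm{dist}\bigl(x_i\theta,\tfrac12\mathbb{Z}\bigr)^{2}.
\]
Splitting $\mathbb{T}$ dyadically according to the value of $F$ (which lies in $[0,d/16]$) and using $|B_t|\le 1$ for the sublevel sets $B_t:=\{F\le t\}$, one finds a threshold $\kappa=O_{C}(\log d)$ for which $B:=B_\kappa$ has measure $|B|\ge d^{-C-1}$. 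The set $B$ is symmetric, contains $0$, and satisfies a quasi-triangle inequality: since the distance to $\tfrac12\mathbb{Z}$ is subadditive and $(a+b)^{2}\le 2a^{2}+2b^{2}$, any $\theta,\theta'\in B$ satisfy $F(\theta+\theta')\le 2F(\theta)+2F(\theta')\le 4\kappa$, so $B+B\subseteq B_{4\kappa}$ and more generally the $m$-fold sumset lies in $B_{m^{2}\kappa}$.

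\emph{Step 3: transfer structure from the frequency side to the physical side.} Discretising $\mathbb{T}$ at scale $1/N$ for a large prime $N$ (or invoking a continuous Freiman theorem), $B$ becomes a subset of $\mathbb{Z}/N\mathbb{Z}$ of density $\ge d^{-C-1}$, and a Freiman-type structure theorem places a positive-density portion of it inside a GAP $P$ of bounded rank and of volume $\gg N/d^{O_{C}(1)}$. Dualising --- the annihilator of a proper rank-$r$ GAP of volume $V$ inside $\mathbb{Z}/N\mathbb{Z}$ is controlled by a rank-$r$ GAP of volume $\approx N^{r}/V$, which is polynomial in $d$ here --- the bound $F(\theta)\le\kappa$ valid for every $\theta\in P$ forces all but a small fraction of the $x_i$ to lie in the dual GAP $Q$, of rank $r=r(C)$ and volume at most $d^{C'}$.

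The main obstacle is keeping the rank bounded independently of $d$: the only doubling estimate for $B$ available for free is polynomial (from $|B+B|\le 1\le d^{C+1}|B|$), and Freiman's theorem applied with a doubling constant that grows with $d$ returns a rank growing with $d$. The fix --- and the reason the theorem carries an exceptional set of $d^{1-\epsilon}$ elements --- is to iterate: extract a bounded-rank GAP explaining most of $X$, delete those elements, and rerun the argument on the remainder, choosing the number of rounds as a function of $\epsilon$ so that the leftover never exceeds $d^{1-\epsilon}$ while the rank and volume exponent stabilise to constants depending only on $C$ and $\epsilon$. (Alternatively one can sharpen the structure in a single pass via a tensor-power trick, running Steps 1--2 for the multiset of $k$-fold subset-sums of $X$ with $k=k(\epsilon)$.) Executing this iteration and propagating the volume bound through the duality of Step 3 is where essentially all the work lies; Steps 1 and 2 are routine.
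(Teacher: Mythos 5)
First, a point of calibration: the paper does not prove this lemma at all. It is imported verbatim from Tao and Vu \cite{TV09} --- the $\Box$ attached to the statement marks it as a quoted result --- after precisely the translation you perform in Step 1, namely that $|\Sigma(X)|\le d^{C}$ forces the concentration probability $\max_v\Pr[\sum_i\varepsilon_i x_i=v]\ge d^{-C}$ by pigeonhole. So to match the paper, the citation in your opening sentence is all that is required.

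Judged as a proof, however, the sketch has a genuine gap, and it sits exactly where you say it does. Steps 1 and 2 (Esseen's inequality, the bound $|\cos 2\pi t|\le e^{-c\,\mathrm{dist}(t,\frac12\mathbb{Z})^{2}}$, and the extraction of a sublevel set $B_\kappa$ of measure at least $d^{-C-1}$ with $\kappa=O_C(\log d)$) are the standard Hal\'asz-type opening and are fine. But Step 3 --- producing a GAP whose rank depends only on $C$ and $\epsilon$, with volume $d^{O(1)}$, containing all but $d^{1-\epsilon}$ of the $x_i$ --- is the entire content of the inverse theorem, and it is not established here. Your diagnosis that the only doubling bound on $B$ available for free is polynomial in $d$, so that Freiman's theorem returns a rank growing with $d$, is correct; but the two repairs you offer are not executed and neither is obviously sound. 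The iteration would need the union of the GAPs extracted in successive rounds to again be a single bounded-rank GAP of polynomial volume, which does not follow automatically, and the $d^{1-\epsilon}$ exceptional set in the statement is not produced by such an iteration in \cite{TV09}. The actual Tao--Vu argument controls the rank not through a doubling/Freiman step applied to $B$ but by exploiting the specific structure of the level set (its behaviour under addition and dilation) together with the geometry of numbers to build the dual GAP directly; that is where essentially all the work lies, and it is precisely the part left open. As written, the proposal is an accurate roadmap of the easy half of the Tao--Vu proof together with an honest admission that the hard half is missing; for the purposes of this paper, one should simply cite \cite{TV09} as the authors do.
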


The following lemma is the key step in our proof.

\begin{lemma}\label{hilbertcount} For $s \leq \log d$, the number of $d$-sets $X \subset [n]$ with $|\Sigma(X)| \leq 2^sd^2$ is at most $n^{O(s)}d^{O(d)}$. 
\end{lemma}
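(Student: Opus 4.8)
The plan is to bound the count by partitioning the $d$-sets $X$ according to the structure guaranteed by the inverse Littlewood--Offord theorem, Lemma \ref{lem:ILO}. Since $s \leq \log d$, we have $2^s d^2 \leq d \cdot d^2 = d^3$, so $|\Sigma(X)| \leq d^C$ with $C = 3$; applying Lemma \ref{lem:ILO} with, say, $\epsilon = 1/2$ produces constants $r$ and $C'$ (depending only on $C$ and $\epsilon$, hence absolute) such that all but at most $d^{1/2}$ elements of $X$ lie in a single GAP $Q \subseteq \mathbb{Z}$ of rank $r$ and volume at most $d^{C'}$. The idea is then to count (i) the number of choices for such a GAP $Q$, (ii) the number of choices for the at least $d - d^{1/2}$ elements of $X$ lying inside $Q$, and (iii) the number of choices for the remaining at most $d^{1/2}$ exceptional elements, which can be anywhere in $[n]$.

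For step (i): a rank-$r$ GAP $Q = \{x_0 + a_1 x_1 + \dots + a_r x_r : 1 \le a_i \le A_i\}$ contained in (a translate inside) $[n]$ is specified by the base point $x_0$, the $r$ generators $x_1, \dots, x_r$, and the side lengths $A_1, \dots, A_r$; each of these is an integer of size at most $n$ (one may assume $|x_i| \leq n$ and $A_i \leq n$ since $Q$ must fit in $[-n,n]$ after translation), so there are at most $n^{2r+1} = n^{O(1)}$ choices — here $r$ is an absolute constant, so this is $n^{O(s)}$ trivially (indeed better). For step (ii): the GAP $Q$ has at most $d^{C'}$ elements, and we must choose at most $d$ of them to be in $X$, giving at most $\binom{d^{C'}}{d} \leq (d^{C'})^d = d^{O(d)}$ choices. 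For step (iii): we choose a subset of size at most $d^{1/2}$ from $[n]$ to be the exceptional elements, and also record which positions among the $d$ elements of $X$ are exceptional; this costs at most $\binom{d}{d^{1/2}} n^{d^{1/2}} \leq 2^d n^{\sqrt d} = d^{O(d)} n^{O(\sqrt d)}$, and since $\sqrt d = O(s)$ is false in general — here I need to be careful. Multiplying the three bounds gives at most $n^{O(1)} \cdot d^{O(d)} \cdot n^{\sqrt d} 2^d$, which is $n^{O(\sqrt d)} d^{O(d)}$.

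The main obstacle is reconciling the exponent of $n$ with the claimed bound $n^{O(s)}$: the naive count of the exceptional elements gives $n^{O(\sqrt d)}$, which is far larger than $n^{O(s)}$ when $s$ is small (e.g. $s$ constant). To fix this I would apply Lemma \ref{lem:ILO} more carefully with $\epsilon$ close to $1$, or iterate: since $|\Sigma(X)| \leq 2^s d^2$ is genuinely small, the number of exceptional elements that the inverse theorem allows should be controllable in terms of $s$ rather than a fixed power of $d$. Concretely, one should track how the allowed exceptional set size depends on the quality of the bound on $|\Sigma(X)|$; a sharper inverse Littlewood--Offord statement (or a direct argument using Lemma \ref{lem:Sigmabasic}, namely $|\Sigma(X)| \geq \binom{d+1}{2}+1$, to see that $|\Sigma(X)| \leq 2^s d^2$ forces substantial additive structure) should reduce the number of "free" elements to $O(s)$ or so, at which point step (iii) costs only $n^{O(s)}$ and the product is $n^{O(s)} d^{O(d)}$ as claimed. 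I expect the clean way is: remove the at most $O(s)$ largest elements of $X$ (which alone could contribute a factor $2^{O(s)}$ to $|\Sigma|$), observe the remainder has $|\Sigma|$ bounded by a fixed polynomial in $d$ and hence sits in a bounded-rank GAP of polynomial volume with only $d^{o(1)}$ further exceptions — but pushing the last exceptions down to $O(s)$ is the delicate point that the author's proof will presumably handle via the precise form of the Tao--Vu theorem.
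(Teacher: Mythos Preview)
Your steps (i) and (ii) match the paper's argument exactly, and you correctly identify that the naive count in step (iii), namely $n^{\sqrt d}$ choices for the exceptional elements, is the obstacle. However, you do not resolve this obstacle, and none of your proposed fixes would work. Applying Lemma~\ref{lem:ILO} with $\epsilon$ closer to $1$ still leaves $d^{1-\epsilon}$ exceptions, which is a power of $d$ and not $O(s)$; no version of the inverse Littlewood--Offord theorem outputs an exceptional set of size $O(s)$ here. Removing the ``$O(s)$ largest elements'' of $X$ also does not force $|\Sigma|$ of the remainder to drop to $O(d^2)$, since the large elements of $X$ need not be the ones responsible for the factor $2^s$.

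The missing idea is an elementary doubling argument on top of the inverse theorem. Having fixed $X'\subset Q$, add the exceptional elements $x_1<\cdots<x_m$ (with $m\le \sqrt d$) one at a time, setting $X_0=X'$ and $X_{i+1}=X_i\cup\{x_{i+1}\}$. For each $i$ there are only two possibilities: either $|\Sigma(X_{i+1})|=2|\Sigma(X_i)|$, or $|\Sigma(X_{i+1})|<2|\Sigma(X_i)|$, in which case $(x_{i+1}+\Sigma(X_i))\cap\Sigma(X_i)\neq\emptyset$, i.e.\ $x_{i+1}\in\Sigma(X_i)-\Sigma(X_i)$ and hence has at most $|\Sigma(X)|^2\le d^6$ possible values. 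Since $|\Sigma(X_0)|\ge\binom{|X'|+1}{2}\ge d^2/4$ by Lemma~\ref{lem:Sigmabasic} and $|\Sigma(X_m)|\le 2^sd^2$, the doubling case can occur at most $s+2$ times. Choosing which $\le s+2$ indices are doubling steps and then the elements gives at most $\binom{m}{\le s+2}\,n^{s+2}\,(d^6)^m=n^{O(s)}d^{O(\sqrt d)}$ choices for $X\setminus X'$, which combined with your (i) and (ii) yields $n^{O(s)}d^{O(d)}$.
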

\begin{proof}
Let $C=3$, $\epsilon=1/2$ and $r$ and $C'$ be the constants given by Lemma \ref{lem:ILO}. If a $d$-set $X$ satisfies $|\Sigma(X)| \leq 2^sd^2 \leq d^3$, then, by Lemma \ref{lem:ILO}, it has a subset $X' \subset X$ which lies in a  GAP $Q$ of dimension $r$ and volume at most $d^{C'}$ such that $|X \setminus X'| \leq d^{1-\epsilon}=d^{1/2}$. 

The number of possible choices for $Q$ is $d^{O(1)}n^{O(1)}$ since it is of constant
dimension. For each such $Q$, the number of subsets of $Q$ of size at most $d$ is
at most  ${|Q| \choose \leq d} \leq |Q|^d = d^{O(d)}$. Thus, the number of choices of $X'$ is at most 
$d^{O(d)}n^{O(1)}$. 

Let $m=|X\setminus X'|$ and order the elements of $X \setminus X'$ as $x_1<\dots<x_m$. Consider the nested sequence $X_0 \subset X_1 \subset \dots \subset X_{m}$ of sets where $X_0=X'$ and $X_{i+1}=X_i \cup \{x_{i+1}\}$, so $X_m=X$. If $|\Sigma(X_{i+1})|<2|\Sigma(X_i)|$, then $x_{i+1}$ is in the difference set
$\Sigma(X_{i})-\Sigma(X_{i})$ and there are at most $|\Sigma(X_i)|^2 \leq |\Sigma(X)|^2 \leq 
2^{2s}d^4 \leq d^{6}$ such choices for $x_{i+1}$. Otherwise,
$|\Sigma(X_{i+1})|=2|\Sigma(X_i)|$ and there are at most $n$ choices for
$x_{i+1}$.

Note that, by Lemma \ref{lem:Sigmabasic}, we have $|\Sigma(X')| \geq {|X'|+1 \choose 2}+1 \geq d^2/4$. 
Suppose there are $t$ elements $x_{i+1}$ where $|\Sigma(X_{i+1})|=2|\Sigma(X_i)|$.
We have $t  \leq s+2$ as $|\Sigma(X_0)|=|\Sigma(X')| \geq d^2/4$ and
$2^sd^2 \geq |\Sigma(X)|=|\Sigma(X_m)| \geq 2^t|\Sigma(X_0)|$. 

Thus, after selecting the at most $s+2$ indices $i$ for which  $|\Sigma(X_{i+1})|=2|\Sigma(X_i)|$, we have that the number of possible $d$-sets $X \subset [n]$ such that $|\Sigma(X)| \leq 2^sd^2$ is at most 
$$d^{O(d)}n^{O(1)}{m \choose \leq s+2}n^{s+2}d^{6m}=d^{O(d)}n^{O(s)}.$$ This completes the proof of the
lemma. 
\end{proof}

\vspace{3mm}

{\bf Proof of Theorem \ref{thm:Hegy}:}
Let $A$ be the random set formed by choosing each element independently with probability $\delta$ and let $E$ be the event that $A$ contains a Hilbert cube of dimension $d = c \sqrt{\log n}$. We wish to show that the probability of the event $E$ tends to zero with $n$. 

Let $m_t$ denote the number of $d$-sets $X \subset [n]$ with $|\Sigma(X)|=t$. We have 
\begin{eqnarray*}
\mathbb{P}[E] & \leq & n \sum_{X} \delta^{|\Sigma(X)|}\\
& = & n \sum_{t} m_t\delta^{t},
\\ 
& = & n \sum_{t < d^3 } m_t\delta^{t}+ n \sum_{t \geq d^3} m_t\delta^{t}.
\end{eqnarray*} 

Note that the extra factor of $n$ comes from summing over all possible first coordinates $x_0$ for the Hilbert cube. The total number of choices for $X$ is at most $n^d$ by choosing the basis elements arbitrarily. Hence,  the second term is at most $n \cdot n^d \cdot \delta^{d^3} = o(1)$,
where we use $d=c\sqrt{\log n}$ and $c$ is a sufficiently large constant depending on $\delta$. 

We next bound the first term above. Let $a_s=2^{s-3}d^2$ and $n_s=\sum_{t=a_s}^{a_{s+1}-1}m_t$. 
We use Lemma \ref{hilbertcount} which gives $n_s \leq n^{O(s)}d^{O(d)}$. 
Hence, using a dyadic partition of the sum, the first term is at most 
$$n\sum_{s=1}^{2+\log d} n_s\delta^{2^{s-3}d^2} \leq n\sum_{s=1}^{2+\log d} n^{O(s)}d^{O(d)}\delta^{2^{s-3}d^2} =o(1),$$
where the last estimate uses $d=c\sqrt{\log n}$ and $c$ is a sufficiently large constant depending on $\delta$ to ensure that each of the terms in the sum is $o(n^{-2})$. 

It therefore follows that $\mathbb{P}[E]$ tends to zero as $n$ tends to infinity, completing the proof.
{\hfill$\Box$\medskip}

By considering a random $r$-coloring of $[n]$ and carefully checking the dependence of the constant factor on $\delta$ in the above proof, we have the following corollary. 

\begin{corollary}
There exists a constant $c$ such that
\[h(d,r) \geq r^{c d^2}.\]
\end{corollary}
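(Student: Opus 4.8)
The plan is to derive the coloring lower bound from the random-subset argument in the proof of Theorem~\ref{thm:Hegy} by replacing the single-threshold density $\delta$ with a uniformly random $r$-coloring of $[n]$ and tracking how the various constants depend on the density. Set $n = r^{cd^2}$ for a small absolute constant $c>0$ to be chosen, and color each element of $[n]$ independently and uniformly with one of $r$ colors. For a fixed color class, each element lies in that class with probability $\delta = 1/r$, so each color class is distributed exactly as the random set $A$ in Theorem~\ref{thm:Hegy} with density $\delta = 1/r$. The goal is to show that with positive probability no color class contains a Hilbert cube of dimension $d$; by a union bound over the $r$ colors it suffices to show that a single color class avoids a $d$-dimensional Hilbert cube with probability $> 1 - 1/r$.

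The main work is re-running the computation $\mathbb{P}[E] \leq n\sum_t m_t \delta^t$ with $\delta = 1/r$ and keeping the $r$-dependence explicit. As before, split the sum at $t = d^3$. For the tail $t \geq d^3$: the number of $d$-sets $X \subset [n]$ is at most $n^d$, and the extra factor $n$ accounts for the translate $x_0$, so this contributes at most $n^{d+1}\delta^{d^3} = r^{cd^2(d+1)} r^{-d^3} = r^{-d^3 + cd^3 + cd^2}$, which is much smaller than $r^{-2}$ once $c$ is small (say $c \leq 1/4$) and $d$ is not too small. For the head $t < d^3$, I use the dyadic decomposition with $a_s = 2^{s-3}d^2$ and $n_s = \sum_{t=a_s}^{a_{s+1}-1} m_t \leq n^{O(s)} d^{O(d)}$ from Lemma~\ref{hilbertcount}; note crucially that Lemma~\ref{hilbertcount} is about counting $d$-sets and is entirely independent of $\delta$, so its implied constants are absolute. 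The head is then bounded by
\[
n \sum_{s=1}^{2+\log d} n^{O(s)} d^{O(d)} \delta^{2^{s-3}d^2}
= \sum_{s=1}^{2+\log d} r^{O(scd^2)} d^{O(d)} r^{-2^{s-3}d^2}.
\]
Since $2^{s-3}$ grows faster than any linear function of $s$, for $c$ small enough (depending only on the absolute constants hidden in the $O(\cdot)$'s of Lemma~\ref{hilbertcount}) each term is at most $r^{-2^{s-4}d^2}$, and the whole sum is $o(r^{-2})$ provided $d \geq d_0$ for an absolute constant $d_0$. Adding the head and tail bounds gives $\mathbb{P}[E] < 1/r^2 \cdot r = 1/r$ after the union bound over colors, which is $<1$, so a good coloring exists and hence $h(d,r) > n = r^{cd^2}$.

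The main obstacle — really the only delicate point — is verifying that the constant $c$ can be chosen \emph{independently of $r$}. This hinges on the observation that all the $O(\cdot)$ constants entering the estimate (from Lemma~\ref{hilbertcount}, which in turn rests on the inverse Littlewood--Offord Lemma~\ref{lem:ILO} applied with the fixed choice $C=3$, $\epsilon=1/2$, and from Lemma~\ref{lem:Sigmabasic}) are absolute, with no dependence on $\delta$ whatsoever; the only place $\delta$ entered the original proof was through the factors $\delta^t$, and those now simply become $r^{-t}$. One should also handle small $d$ separately: for $d < d_0$ the bound $h(d,r) \geq r^{cd^2}$ holds trivially by shrinking $c$, since $h(d,r) \geq 2$ always (or one may invoke the classical lower bound $h(1,r)=r+1$ etc.). With that caveat the corollary follows. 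As a remark, this matches the upper bound $h(d,r) \leq (2r)^{2^{d-1}}$ only in the dependence on $r$ for fixed $d$, and determining the correct dependence on $d$ remains open.
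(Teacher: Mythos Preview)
Your proposal is correct and follows exactly the approach the paper indicates: the paper does not give a proof of this corollary at all, only the one-line hint ``By considering a random $r$-coloring of $[n]$ and carefully checking the dependence of the constant factor on $\delta$ in the above proof, we have the following corollary.'' You have faithfully unpacked that hint, and the key observation you isolate --- that the implied constants in Lemma~\ref{hilbertcount} are absolute because that lemma is purely combinatorial and never sees $\delta$ --- is precisely the point. One small quibble: in the treatment of $d < d_0$, the throwaway remark ``$h(d,r)\geq 2$ always'' is not enough by itself (it would force $c$ to depend on $r$), but your parenthetical alternative $h(d,r)\geq h(1,r)=r+1$ does the job, since then $r^{cd^2}\leq r+1$ is guaranteed by the $r$-independent choice $c\leq 1/d_0^2$.
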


We will not be able to improve this further unless we can improve the bound on the van der Waerden number $W(k,r)$, the smallest number which guarantees a $k$-term AP in any $r$-coloring of $[W(k,r)]$. Indeed, the lower bound on $W(k,2)$ is exponential in $k$ and, since a $d^2$-term AP contains a $d$-cube, $W(d^2,r) \geq h(d,r)$ . It seems plausible that $W(d^2,2)$ and hence $h(d,2)$ grow as exponentials in $d^2$.

\end{document}